\documentclass[10pt]{amsart} %{article}%
\usepackage{amsfonts}
\usepackage{amsmath}
\usepackage{amssymb}
\usepackage{graphicx}%
\setcounter{MaxMatrixCols}{30}

\addtolength{\parskip}{0.3cm}

\usepackage{color}

\definecolor{m}{cmyk}{0,1,0,0}

\definecolor{b}{rgb}{0,0,1}
\def\blue#1{{\color{b} #1}}

\definecolor{r}{rgb}{1,0,0}
\def\red#1{{\color{r} #1}}

\newcommand{\NN}{\mathbb{N}}

\newcommand{\RR}{\mathbb{R}}
\newcommand{\CC}{\mathbb{C}}

\newcommand{\HH}{\mathbb{H}}

\newcommand{\rr}{\mathbb{R}}

%TCIDATA{OutputFilter=latex2.dll}
%TCIDATA{Version=5.50.0.2953}
%TCIDATA{CSTFile=40 LaTeX article.cst}
%TCIDATA{Created=Saturday, August 23, 2014 11:06:31}
%TCIDATA{LastRevised=Monday, March 09, 2015 21:33:09}
%TCIDATA{<META NAME="GraphicsSave" CONTENT="32">}
%TCIDATA{<META NAME="SaveForMode" CONTENT="1">}
%TCIDATA{BibliographyScheme=Manual}
%TCIDATA{<META NAME="DocumentShell" CONTENT="Standard LaTeX\Standard LaTeX Article">}
%TCIDATA{Language=American English}
%BeginMSIPreambleData
\providecommand{\U}[1]{\protect\rule{.1in}{.1in}}
%EndMSIPreambleData
\newtheorem{theorem}{Theorem}

\newtheorem{corollary}[theorem]{Corollary}

\newtheorem{definition}[theorem]{Definition}

\newtheorem{lemma}[theorem]{Lemma}

\newtheorem{proposition}[theorem]{Proposition}
\newtheorem{remark}[theorem]{Remark}

\numberwithin{equation}{section}
\numberwithin{theorem}{section}

\begin{document}

\title[Nonlocal heat equations in the Heisenberg group]{\bf Nonlocal heat equations  in the Heisenberg group}

\author[R. Vidal]{Ra\'ul E. Vidal }

\address[R. Vidal]{\newline
\noindent FaMAF, Universidad Nacional de Cordoba, (5000), Cordoba, Argentina.
\newline
\noindent
email: {\tt vidal@mate.uncor.edu}}

\keywords{Nonlocal diffusion, Heisenberg group. \\
\indent 2010 {Mathematics Subject Classification: 47G10, 47J35, 45G10.   }}

%{\blue{sigeva concurso, cursos EO, pappers, tesis\\}}
%\begin{abstract}
%We study the next Nonlocal diffusion equation in Heisenberg group,
%\[
%u_t(z,s,t)=J\ast u(z,s,t)-u(z,s,t),
%\]         
%where the convolution product is in the Heisenberg group and $J$ satisfies appropriate hypothesis.
%For the Cauchy problem we obtain that the asymptotic behavior of the solutions is the same form what heat equation for Heisenberg group. About the Dirichlet problem for the nonlocal model we prove that the the long time behavior is given by an exponential decay to zero at a rate given by the first eigenvalue of an associated eigenvalue problem with profile an eigenfunction of the first eigenvalue. Also we prove that solutions of properly rescaled nonlocal Dirichlet problems have a time evolution uniform to the solution of the corresponding Dirichlet problem for the classical heat equation in the Heisenberg group.
%Finally we consider the Neumann problem and came up with the solution convergence exponential form to the mean value of the initial condition.  
%\end{abstract}

%{\blue{sigeva concurso, cursos EO, pappers, tesis, defensas de tesis\\}}
\begin{abstract}
We study the following nonlocal diffusion equation in the Heisenberg group $\HH_n$,
\[
u_t(z,s,t)=J\ast u(z,s,t)-u(z,s,t),
\]         
where $\ast$ denote convolution product and $J$ satisfies appropriated hypothesis.
For the Cauchy problem we obtain that the asymptotic behavior of the solutions is the same form that the one for the heat equation in the Heisenberg group. To obtain this result we use the spherical transform related to the pair $(U(n),\HH_n)$. Finally we prove that solutions of properly rescaled nonlocal Dirichlet problem converge uniformly to the solution of the corresponding Dirichlet problem for the classical heat equation in the Heisenberg group.
\end{abstract}

\maketitle

\section{Introduction and Preliminaries} 

\label{sect-intro}

During the last years, many authors have studied the asymptotic behavior for several nonlocal diffusion models in the whole $\RR^n$. In some cases, this behavior is related with the asymptotic behavior of the local diffusion model.  

In  \cite{CCR} the authors study the nonlocal diffusion equation in $\RR^n$ given by
\begin{align}\label{0.1}
u_t(x,t)=J\ast u(x,t)-u(x,t),
\end{align}
where $\ast$ denote convolution product. 
For the Cauchy problem, they prove that the long time behavior of the solutions is determined by the behavior of the Fourier transform $\widehat{J}$ of $J$ near the origin. If $\widehat{J}(\xi ) = 1 - A|\xi|^\alpha + o(|\xi|^\alpha),\,\,\,\,  (0 <\alpha\leq 2)$, the asymptotic behavior is the same as the one for solutions of the evolution given by
the $\alpha/2$ fractional power of the Laplacian. Concerning the Dirichlet problem for the nonlocal model they prove that the asymptotic behavior is given by an exponential decay to zero at a rate given by the
first eigenvalue of an associated eigenvalue problem with profile an eigenfunction of the first eigenvalue. Finally, they analyse the Neumann problem and find an exponential convergence to the mean value of the initial condition.

In the work \cite{CER} the authors prove that solutions of properly rescaled nonlocal Dirichlet problems of the equation (\ref{0.1}) approximate uniformly the solution of the corresponding Dirichlet problem for the classical heat equation in $\rr^n$.

These type of problems have been studied for the case of different elliptical operators and $p$-Laplacian operators, see \cite{andreu}, \cite{andreu2}, \cite{BCC}, \cite{Ignat-Rossi}, \cite{KRV}, \cite{PLPS} and \cite{S1}. 

In \cite{R} the author considers the classic heat equation for Carnot groups and settles the  asymptotic behavior of the solution. The Heisenberg group is the main example of the Carnot groups.      

At the present work we study a similar problems to the ones in \cite{CCR} and \cite{CER}, in the Heisenberg group. In order to do this we have to consider the results obtained in \cite{R}, the fact that $\HH_n$ is a homogeneous group and the harmonic analysis related to the action of the unitary group $U(n)$ by automorphism on $\HH_n$. 

Let $\HH_n=\CC^n\times \RR$ the $2n+1$ dimensional Heisenberg group, with law group $(z,s).(\tilde z, \tilde s)=(z+\tilde z,s+ \tilde s+\frac12\text{Im}\langle z,\tilde z\rangle)$, where $\langle z,\tilde z\rangle$ denote the Hermitian inner product of $\CC^n$.  The Haar measure of the group is de Lebesgue measure.  If we write $z=x+iy$, with $x,y$ in $\rr^n$     
we have a global coordinate system $(x,y,s)$ and the vector fields $X_j=\frac{\partial}{\partial x_{j}}-\frac{y_j}{2}\frac{\partial}{\partial s},$ $Y_j=\frac{\partial}{\partial y_{j}}+\frac{x_j}{2}\frac{\partial}{\partial s},$ and $S=\frac{\partial}{\partial s}$ form a basis for the Lie algebra $\mathfrak{h}_{n}$ of $\HH_n$. 

The Heisenberg Laplacian is $ L := \sum\limits_{j=1}^{n} X_{j}^{2}+Y_{j}^{2}$. In coordinates is given by
\begin{align}\label{0.0.0.1}
L = \sum\limits_{j=1}^{n}\left( \frac{\partial^2}{\partial x_{j}^{2}}+\frac{\partial^2}{\partial y_{j}^{2}}\right)+\frac14\frac{\partial^2}{\partial s^{2}}\sum\limits_{j=1}^{n}\left(x_j^2+y_j^2\right)+\frac{\partial}{\partial s}\sum\limits_{j=1}^{n}\left(x_j\frac{\partial}{\partial y_j}-y_j\frac{\partial}{\partial x_j}  \right),
\end{align} 
The Laplacian $L$ is a second order degenerate elliptic operator of H\"ormander type and hence it is hypoelliptic see \cite{J}. 
%Note that the eigenvalues of $L$ are $\lambda_1=0,$ $\lambda_2=1,$ and $\lambda_3=1+\frac14\sum\limits_{j=1}^{n}\left(x_j^2+y_j^2\right)$.

We recall that a Lie group is called a \textit{homogeneous group} if it is a connected, simply connected, nilpotent Lie group $G$, whose Lie algebra $\mathfrak{g}$ is endowed with a family of dilatation $\{\delta_r\}_{r\in\NN}$. Let exp$:\mathfrak{g}\rightarrow G$ be the exponential map, which in this case is a diffeomorphism.  The maps exp$\,\delta_r\,$exp$^{-1}$ are group automorphisms of $G$ also denoted by $\delta_r$ and called dilations of $G$. A standard example is given by $\delta_{r}(z,s)=(r^{\frac12}z,rs)$, $r>0$ and $(z,s)\in\HH_n$. 

Let $U(n)$ the unitary group, which acts by automorphism on $\HH_n$ by $g\cdot(z,s)=(gz,s),$ $g\in U(n)$ and $(z,s)\in \HH_n$.
We will denote by $\mathcal{S}(\HH_n)^{U(n)}$ the space of functions in the Schwartz space that are invariant by the action of $U(n)$ and we will denote by $L^1(\HH_n)^{U(n)}$ the space of $L^1(\HH_n)$ the functions that are invariant by the action of $U(n)$. Since $(L^1(\HH_n)^{U(n)},\ast)$ is a commutative algebra, its spectrum $\Sigma$ is given by the family of the spherical functions $\{\varphi_{\lambda,k}\}_{\lambda\in{\RR\setminus\{0\}},k\in\NN}\cup\{\eta_r\}_{r\in\RR^{\geq0}}$ associated with the Gelfand pair $(\HH_n,U(n))$, see \cite{HR}, \cite{K} and \cite{S}.

As usual, $\mathcal{U}(\mathfrak{h}_{n})$ will denote its universal enveloping algebra, which can be identified with the algebra of left invariant differential operators on $\HH_n$.  It is well known that the commutative subalgebra $\mathcal{U}(\mathfrak{h}_{n})^{U(n)}$ of the elements which commute with the action of $U(n)$ is generated by $S$ and the Heisenberg Laplacian $L$.  The spherical functions are eigenfunction of the operators $L$ and $S$, they satisfying 
\begin{equation}\label{0.1.0}
\left\{
\begin{array}
[c]{l}%
L\varphi_{\lambda,k} = -|\lambda|(2k+n)\varphi_{\lambda,k},\qquad \lambda\in{\RR\setminus\{0\}},k\in\NN\\[10pt]
iS\varphi_{\lambda,k}= \lambda\varphi_{\lambda,k}.
\end{array}
\right.
\end{equation} 
and
\begin{equation*}
\left\{
\begin{array}
[c]{l}%
L\eta_{r} = -r^2\eta_{r},\qquad r\in\RR,r\geq0\\[10pt]
iS\eta_{r}= 0.
\end{array}
\right.
\end{equation*} 
Explicitly
\begin{align*}
&\varphi_{\lambda,k}(z,s)=e^{i\lambda s}L_{k}^{n-1}\left( \frac{|\lambda|}{2} |z|^{2} \right) e^{-\frac{|\lambda|}{4}|z|^{2}},\\
&\eta_r(z,s)=\frac{2^{n-1}(n-1)!}{(r|z|)^{n-1}}J_{n-1}(r|z|),
\end{align*}
where $L_{k}^{n-1}$ denotes, as usual, a Laguerre polynomial of order $n-1$ and degree $k$ normalized by $L_{k}^{n-1}(0)=1$ and $J_{n-1}$ is a Bessel function of order $n-1$ of the first kind. The $\varphi_{\lambda,k}$ functions satisfy the following properties: 
\begin{align}\label{0.1.3}
&\text{If }c\in\rr\text{ then }\varphi_{c\lambda,k}(z,s)=\delta_c\varphi_{\lambda,k}(z,s)=\varphi_{\lambda,k}(\sqrt{c}\,z,cs),\\
\nonumber&\|\varphi_{\lambda,k}\|_{L^\infty(\HH_n)}=1.
\end{align}

The spectrum $\Sigma$ is identify with the set of eigenvalues, $\Sigma=\{(\lambda,|\lambda|(2k+n)):\lambda\in\RR\setminus\{0\},k\in\NN\}\cup\{r\in\RR,r\geq0\}$, with the following measure, if $g\in L^1(\Sigma)$ we have 
$$
\|g\|_{L^1(\Sigma)}=\sum_{k\in\NN}\int_\RR |g(\lambda,k)|\,|\lambda|^nd\lambda<\infty.
$$

For $f\in \mathcal{S}(\HH_n)$ we define the spherical transform, $\widehat{f}:\Sigma\longrightarrow\rr$, by 
\begin{align}\label{0.1.1}
&\widehat{f}(\lambda,k)=\int_{\HH_n}f(z,s)\varphi_{\lambda,k}(-z,-s)\,dz\,ds\\
\nonumber&\widehat{f}(0,r)=\int_{\HH_n}f(z,s)\eta_{r}(-z,-s)\,dz\,ds.
\end{align}

If $f\in L^1(\HH_n)^{U(n)}$ and $\widehat f\in L^1(\Sigma)$, (for example $f\in \mathcal{S}(\HH_n)^{U(n)}$), we use the next Plancherel inversion formula to decompose $f$, see \cite{S}, 
\begin{align}\label{0.1.2}
   f(z,s)&=\sum\limits_{k\ge 0} \int\limits_{-\infty}^{\infty} (f\ast \varphi_{\lambda,k})(z,s) |\lambda|^{n} d\lambda\\
   &=\sum\limits_{k\ge 0} \int\limits_{-\infty}^{\infty} \widehat{f}(\lambda,k)\varphi_{\lambda,k}(z,s)|\lambda|^{n} d\lambda.\nonumber
\end{align}
 
Now let us consider the classical heat equation for the Heisenberg group, defined by
%$$v_t(z,s,t)=Lv(z,s,t).$$
 %We consider the the problem given for 
\begin{equation}\label{0.2}
\left\{
\begin{array}
[c]{l}%
v_t(z,s,t)=Lv(z,s,t), \\[10pt]
v(z,s,0)=u_0(z,s).
\end{array}
\right.
\end{equation} 
In \cite{Fo} the author proved there is a unique heat kernel $P:\HH_n\times (0,\infty)\longrightarrow \rr$, $P(z,s,t)=P_t(z,s)$ with $P_0=\delta_0$,  $P_t\geq0$ and $\int_{\HH_n}P_t=1$. The solution of the equation (\ref{0.2}) is given by $v(z,s,t)=P_t\ast u_0(z,s)$, where the convolution product is in the Heisenberg group. He also proves that $P_t$ is $C^\infty$, (see also \cite{ABG}, %\cite{AL},
 \cite{C}, \cite{E}, \cite{O}, %\cite{VSC},
  and \cite{CCG}). % {\blue{ver si con estas citas esta bien}}.

In \cite{R} the author proves that if $u_0\in L^1(\HH_n)$ then 
\begin{align}\label{0.3}
\|v(\cdot,\cdot,t)\|_{\infty}\leq C t^{\frac{-(2n+2)}2},
\end{align}
 where the constant $C$ depends on the norm $\|u_0\|_{L^1(\HH_n)}$. 

%{\blue{hablar de la norma en $\HH_n$}} 

In this work we consider the nonlocal equation given by
\begin{align}\label{pro0}
u_t(z,s,t)=J\ast u(z,s,t)-u(z,s,t),
\end{align}
where the convolution product is in the Heisenberg group and $J$ satisfies the following hypothesis:

\textit{(H)} $J:\HH_n\rightarrow \RR$ is a real function invariant by the action of $U(n)$ with $\displaystyle\int_{\mathbb{H}_{n}}J(z,s)\,dzds=1$. 

We will assume \textit{(H)} throughout   the paper.

Let us now state our results concerning the asymptotic behavior.

The first problem to be addressed is the Cauchy diffusion problem in $\HH_n$. We consider the equation 
\begin{equation}\label{pro1}
\left\{
\begin{array}
[c]{l}%
u_t(z,s,t)=J\ast u(z,s,t)-u(z,s,t), \\[10pt]
u(z,s,0)=u_0(z,s).
\end{array}
\right.
\end{equation} 
 
For this problem we study the asymptotic behavior in the infinity and use the spherical transform to prove the following result    
 
\begin{theorem}\label{1}
Let $u$ the solution of the problem (\ref{pro1}) with $u_0$ in $L^1(\HH_n)^{U(n)}$ and $\widehat{u_0}$ in $L^1(\Sigma)$. Assume that $J$ satisfies \textit{(H)} and that $\widehat{J}(\lambda,k)<1$ for $\lambda\neq0$, $k\in\NN$. Also we assume 
$$\widehat{J}(\lambda,k)=1-|\lambda|(2k+n)+o(|\lambda|(2k+n)),\qquad \text{with}\quad\, \lim_{|\lambda|(2k+n)\rightarrow0}\frac{o(|\lambda|(2k+n))}{|\lambda|(2k+n)}=0.$$
Then the asymptotic behavior of $u(z,s,t)$ is given by 
\begin{align}\label{T1}
\lim_{t\longrightarrow \infty}t^{n+1}\max_{(z,s)}|u(z,s,t)-v(z,s,t)|=0,
\end{align}
where $v$ is the solution of heat equation for the Heisenberg group (\ref{0.2}).\\
The asymptotic profile is given by:  
\begin{align*}
&\lim_{t\longrightarrow \infty}\max_{(z,s)}|t^{n+1}\delta_{t}u(z,s,t)-G_{u_0}(z,s)|=0,
\end{align*}
where $G_{u_0}(z,s)$ satisfies $\widehat{G_{u_0}}(\lambda,k)=e^{-|\lambda|(2k+n)}\widehat{u_0}(0,k)$ and $\delta_t(z,s)=(t^{\frac12}z,ts)$.\\
We also have,  
$$
\|u(\cdot,\cdot,t)\|_{L^\infty(\HH_n)}\leq Ct^{-(n+1)},
$$
and by interpolation for $2<p<\infty$,
$$
\|u(\cdot,\cdot,t)\|_{L^p(\HH_n)}\leq Ct^{-(n+1)(\frac{p-2}{p})}.
$$
\end{theorem}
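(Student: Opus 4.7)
The plan is to exploit the spherical transform associated to the Gelfand pair $(\HH_n, U(n))$ to diagonalize both evolutions simultaneously. Since $u_0$ and $J$ are $U(n)$-invariant, so is $u(\cdot,\cdot,t)$ for every $t>0$, and applying (\ref{0.1.1}) to (\ref{pro1}) converts it into the pointwise ODE $\partial_t \widehat{u} = (\widehat{J}-1)\widehat{u}$, so
\begin{align*}
\widehat{u}(\lambda,k,t) = e^{t(\widehat{J}(\lambda,k)-1)}\widehat{u_0}(\lambda,k),\qquad \widehat{v}(\lambda,k,t) = e^{-t|\lambda|(2k+n)}\widehat{u_0}(\lambda,k),
\end{align*}
the second identity following from (\ref{0.1.0}). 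Inserting the difference into the Plancherel inversion formula (\ref{0.1.2}) and using $\|\varphi_{\lambda,k}\|_\infty = 1$ yields
\begin{align*}
\|u(\cdot,\cdot,t) - v(\cdot,\cdot,t)\|_\infty \leq \sum_{k\geq 0} \int_\RR \bigl| e^{t(\widehat{J}(\lambda,k)-1)} - e^{-t|\lambda|(2k+n)} \bigr|\, |\widehat{u_0}(\lambda,k)|\, |\lambda|^n\, d\lambda.
\end{align*}

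The decisive move is the substitution $\lambda = \mu/t$, under which $|\lambda|^n d\lambda = t^{-(n+1)}|\mu|^n d\mu$; multiplying by $t^{n+1}$ converts the bound above into
\begin{align*}
t^{n+1}\|u(\cdot,\cdot,t) - v(\cdot,\cdot,t)\|_\infty \leq \sum_{k\geq 0} \int_\RR \bigl| e^{t(\widehat{J}(\mu/t,k)-1)} - e^{-|\mu|(2k+n)} \bigr|\, |\widehat{u_0}(\mu/t,k)|\, |\mu|^n\, d\mu.
\end{align*}
The Taylor hypothesis on $\widehat{J}$ forces $t(\widehat{J}(\mu/t,k)-1) \to -|\mu|(2k+n)$ pointwise, so the integrand converges to zero, and dominated convergence gives (\ref{T1}). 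For the asymptotic profile, the dilation identity $\varphi_{t\lambda,k}(z,s) = \varphi_{\lambda,k}(\sqrt{t}z, ts)$ from (\ref{0.1.3}) combined with the same substitution yields
\begin{align*}
t^{n+1}\, u(\sqrt{t}z, ts, t) = \sum_{k\geq 0} \int_\RR e^{t(\widehat{J}(\mu/t,k)-1)}\, \widehat{u_0}(\mu/t,k)\, \varphi_{\mu,k}(z,s)\, |\mu|^n\, d\mu,
\end{align*}
which by the same dominated convergence argument converges uniformly in $(z,s)$ to the function $G_{u_0}$ defined by $\widehat{G_{u_0}}(\mu,k) = e^{-|\mu|(2k+n)}\widehat{u_0}(0,k)$. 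The $L^\infty$ bound then follows from (\ref{0.3}) and the triangle inequality, and the $L^p$ estimate comes from interpolating against $L^2$-contractivity of the nonlocal semigroup, which is immediate from Plancherel on $\HH_n$.

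The main obstacle is manufacturing a dominating function uniform in $t$ that is integrable against the Plancherel measure $|\mu|^n d\mu$ on $\Sigma$. In a neighborhood of $\mu=0$ the asymptotic hypothesis supplies an estimate of the form $t(\widehat{J}(\mu/t,k)-1) \leq -\tfrac12 |\mu|(2k+n)$ once $|\mu|(2k+n)$ is small enough, which produces a Gaussian-type majorant. Outside that neighborhood, the strict inequality $\widehat{J}(\lambda,k)<1$ has to be upgraded to a quantitative exponential bound using some decay of $\widehat{J}$ on $\Sigma$ (e.g.\ a Riemann--Lebesgue-type property available because $J\in L^1$), and the resulting pieces must be assembled into a single integrable majorant against $|\widehat{u_0}(\mu/t,k)|\, |\mu|^n\, d\mu$.
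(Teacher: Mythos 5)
Your proposal follows essentially the same route as the paper: diagonalize both evolutions with the spherical transform, rescale $\lambda\mapsto\lambda/t$ so that the Plancherel measure $|\lambda|^n\,d\lambda$ produces the factor $t^{-(n+1)}$, and control the integrand by splitting $\Sigma$ into a neighborhood of the origin, where the Taylor hypothesis gives $\widehat{J}(\lambda,k)\leq 1-D|\lambda|(2k+n)$, and its complement, where $\widehat{J}\leq 1-E$ follows from $\widehat{J}<1$ together with the Riemann--Lebesgue decay of $\widehat{J}$. The one place your framing is slightly off is the far region: the set $\{|\mu|(2k+n)>\xi t\}$ moves with $t$, so it is not absorbed into a single $t$-independent integrable majorant; the paper instead bounds that contribution directly by $t^{n+1}e^{-Et}\|\widehat{u_0}\|_{L^1(\Sigma)}\rightarrow 0$, and with that substitution your argument coincides with the paper's.
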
 
\begin{remark}
By \textit{(H)} $\displaystyle\int_{\HH_n}J(z,s)dzds=1$  then if $J(z,s)>0$ for all $(z,s)\in\HH_n$ we have  that $\widehat{J}(\lambda,k)<1$.   
\end{remark}
\begin{remark}
In the literature estimates of the decay in infinite norm have been obtained only for nonlocal equation that   approximate the laplacian operator and not for a more general elliptic operator. The Heisenberg laplacian operator for a function $u$ invariant by the action of $U(n)$, is given in polar coordinates by
$$Lu=\frac{\partial^2u}{\partial r^2}+\frac{2n-2}{r}\frac{\partial u}{\partial r}+\frac{r^2}{4}\frac{\partial^2 u}{\partial s^2}.$$
where
$r^2=\sum_{i=1}^n x^2+y^2$.
For this reason, Theorem \ref{1} gives an example of another elliptic operator that can be approximated by a nonlocal equation in infinite norm. 
\end{remark} 

Let us see the existence of a function $J$ that satisfies the hypotheses of the Theorem \ref{T1}.
 \begin{lemma}
 There exist a real function $J$ invariant by the action of $U(n)$ with $\displaystyle\int_{\mathbb{H}_{n}}J(z,s)\,dzds=1$ and $\widehat{J}(\lambda,k)<1$ for $\lambda\neq0$, $k\in\NN$. Moreover, the spherical transform of $J$ is of the form
$$\widehat{J}(\lambda,k)=1-|\lambda|(2k+n)+o(|\lambda|(2k+n)),\qquad \text{with}\quad\, \lim_{|\lambda|(2k+n)\rightarrow0}\frac{o(|\lambda|(2k+n))}{|\lambda|(2k+n)}=0.$$
 \end{lemma}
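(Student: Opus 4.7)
The plan is to produce an explicit candidate by working on the spectral side. The natural choice is $J := P_1$, the heat kernel for the Heisenberg Laplacian $L$ evaluated at time $t = 1$. This is motivated by the observation that the required expansion $\widehat{J}(\lambda,k) = 1 - |\lambda|(2k+n) + o(|\lambda|(2k+n))$ matches the Taylor expansion of $e^{-|\lambda|(2k+n)}$, which is exactly what the heat semigroup produces.

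First I would verify that $J = P_1$ satisfies hypothesis \textit{(H)}: by Folland's results (cited above), $P_1$ is $C^\infty$, nonnegative, and $\int_{\HH_n} P_1 = 1$. Since $L$ commutes with the $U(n)$ action and the initial data $\delta_0$ is $U(n)$-invariant, the solution $P_t$ of $v_t = Lv$ is $U(n)$-invariant for every $t>0$, so $J \in L^1(\HH_n)^{U(n)}$.

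Next I would compute $\widehat{J}$. Applying the spherical transform to the heat equation and using \eqref{0.1.0}, the equation $\partial_t P_t = L P_t$ becomes the ODE
\begin{equation*}
\partial_t \widehat{P_t}(\lambda,k) = -|\lambda|(2k+n)\,\widehat{P_t}(\lambda,k),
\end{equation*}
with initial condition $\widehat{P_0}(\lambda,k) = \widehat{\delta_0}(\lambda,k) = \varphi_{\lambda,k}(0,0) = L_k^{n-1}(0) = 1$. Solving gives $\widehat{P_t}(\lambda,k) = e^{-t|\lambda|(2k+n)}$, and therefore
\begin{equation*}
\widehat{J}(\lambda,k) = e^{-|\lambda|(2k+n)}.
\end{equation*}

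Finally I would read off the remaining properties. For $\lambda \neq 0$ we have $|\lambda|(2k+n) > 0$, hence $\widehat{J}(\lambda,k) < 1$. A Taylor expansion of $e^{-x}$ at $x=0$ gives
\begin{equation*}
\widehat{J}(\lambda,k) = 1 - |\lambda|(2k+n) + O\!\bigl(|\lambda|^2(2k+n)^2\bigr),
\end{equation*}
which is the required form with $o(|\lambda|(2k+n))/|\lambda|(2k+n) \to 0$ as $|\lambda|(2k+n) \to 0$. There is no real obstacle here: the whole point is to recognize that Folland's heat kernel already packages every requirement of the lemma, once one translates it to the spherical transform side via the eigenvalue relation \eqref{0.1.0}.
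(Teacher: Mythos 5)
Your proposal is correct, and it constructs what is in fact the same function $J$ as the paper does, but by travelling in the opposite direction. The paper works purely on the spectral side: it defines $g(\lambda,k)=e^{-|\lambda|(2k+n)}$, checks by an explicit computation that $g\in L^1(\Sigma)$ so that the inverse spherical transform applies, and then needs a separate conjugation argument ($\widehat{\overline J}(\lambda,k)=\overline{g(-\lambda,k)}=g(\lambda,k)$) to conclude that the resulting kernel is real-valued. You instead start on the group side with Folland's heat kernel $P_1$, which hands you realness, nonnegativity, the normalization $\int_{\HH_n}J=1$, smoothness, and (via uniqueness of the heat kernel and the $U(n)$-equivariance of $L$) the $U(n)$-invariance all at once; the only work left is to compute $\widehat{P_t}(\lambda,k)=e^{-t|\lambda|(2k+n)}$ by transforming the heat equation, using $\widehat{Lf}(\lambda,k)=-|\lambda|(2k+n)\widehat f(\lambda,k)$ and $\widehat{\delta_0}=1$ — the same facts the paper itself invokes in the proof of Theorem \ref{1} for the solution $v$ of \eqref{0.2}. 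What your route buys is economy and extra information (positivity of $J$, which by the Remark after Theorem \ref{1} independently forces $\widehat J(\lambda,k)<1$); what the paper's route buys is independence from Folland's existence theorem and a template for manufacturing other admissible kernels from any suitable profile in $L^1(\Sigma)$, not just the exponential one. The one point you pass over quickly is the justification for transforming the evolution equation term by term (differentiating under the integral sign and the identity for $\widehat{Lf}$), but this is standard and used without comment elsewhere in the paper, so it is not a gap.
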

 \begin{proof}
  Let $$g(\lambda,k)=e^{-|\lambda|(2k+n)}=\sum\limits_{j\ge 0}\frac{(-|\lambda|(2k+n))^j}{j!}=1-|\lambda|(2k+n)+o(|\lambda|(2k+n)).$$
We have
\begin{align*}
\|g\|_{L^1(\Sigma)}&=\sum_{k\in\NN}\int_\RR e^{-|\lambda|(2k+n)}\,|\lambda|^nd\lambda\\
&=\sum_{k\in\NN}\frac{1}{(2k+n)^{n+1}}\int_\RR e^{-|\xi|}\,|\xi|^nd\xi<\infty.
\end{align*}
We can apply the inverse spherical transform to the function $g\in L^1(\Sigma)$ in order to obtain a kernel $J$ invariant by the action of $U(n)$ with $\displaystyle\int_{\mathbb{H}_{n}}J(z,s)\,dzds=1$,  such that $\widehat{J}(\lambda,k)=g(\lambda,k)=e^{-|\lambda|(2k+n)}$. 
Now we observe 
\begin{align*}
\widehat{\overline {J(\lambda,k)}}&=\int_{\HH_n}\overline {J(z,s)}\varphi_{\lambda,k}(-z,-s)\,dz\,ds\\
&=\overline {\int_{\HH_n}J(z,s)\overline{\varphi_{\lambda,k}(-z,-s)}\,dz\,ds}\\
&=\overline {\int_{\HH_n}J(z,s)\varphi_{-\lambda,k}(-z,-s)\,dz\,ds}\\
&=g(-\lambda,k)=g(\lambda,k)=\widehat{J(\lambda,k)}.
\end{align*}
Then $J$ is a real function and satisfies the Lemma.
\end{proof} 
   
Next we consider a bounded smooth domain $\Omega \subset \HH_n$ and impose boundary conditions to our model. From now on we assume that $J$ is continuous. We consider the next Dirichlet problem
\begin{equation}\label{pro2}
\left\{
\begin{array}
[c]{l}%
u_t(z,s,t)=J\ast u(z,s,t)-u(z,s,t), \qquad\text{for}\,\, (z,s) \in \Omega \,\, \text{and}\,\, t>0,\\[10pt]
u(z,s,t)=g(z,s,t), \,\,\,\,\,\quad\qquad\qquad\quad\qquad\text{for}\,\, (z,s) \notin \Omega \,\, \text{and}\,\, t>0,\\[10pt]
u(z,s,0)=u_0(z,s), \qquad\qquad\qquad\qquad\quad\text{for}\,\, (z,s) \in \Omega.
\end{array}
\right.
\end{equation}
If $J$ satisfies the following hypothesis 

\textit{($\tilde H$)} $J$ is continuous, no negative with $J(0,0)>0$; $J$ have compact support and is symmetric in the variable $s$. We assume there exists a constant $C_1$ with $\displaystyle\int_{\HH_n} J(z,s)x_j^2\,dzds=C_1$, $\displaystyle\int_{\HH_n} J(z,s)y_j^2\,dzds=C_1$, $\displaystyle\int_{\HH_n} J(z,s)s^2\,dzds=C_1$. 

We will consider the rescaled kernel $$J^\epsilon(z,s)=\displaystyle\frac{2C_1^{-1}}{\epsilon^{2n+2}}\delta_{\epsilon^{-2}}J\left( z,s\right)=\displaystyle\frac{2C_1^{-1}}{\epsilon^{2n+2}}J\left(\frac z\epsilon,\frac{s}{\epsilon^2}\right)$$ and the problem 
\begin{equation}\label{pro3}
\left\{
\begin{array}
[c]{l}%
u^\epsilon_t(z,s,t)=\displaystyle\frac1{\epsilon^2}J^\epsilon\ast u(z,s,t)-u(z,s,t), \quad\text{for}\,\, (z,s) \in \Omega \,\, \text{and}\,\, t>0,\\[10pt]
u^\epsilon(z,s,t)=g(z,s,t), \qquad\qquad\qquad\quad\qquad\text{for}\,\, (z,s) \notin \Omega \,\, \text{and}\,\, t>0,\\[10pt]
u^\epsilon(z,s,0)=u_0(z,s), \,\qquad\qquad\qquad\qquad\quad\text{for}\,\, (z,s) \in \Omega.
\end{array}
\right.
\end{equation}

We prove that the solution of \eqref{pro3} approximate uniformly to the solution of the corresponding Dirichlet problem for the classical heat equation, given by
\begin{equation}\label{pro4}
\left\{
\begin{array}
[c]{l}%
v_t(z,s,t)=Lv(z,s,t), \qquad\text{for}\,\, (z,s) \in \Omega \,\, \text{and}\,\, t>0,\\[10pt]
v_t(z,s,t)=g(z,s,t), \,\,\,\,\qquad\text{for}\,\, (z,s) \in \partial \Omega \,\, \text{and}\,\, t>0,\\[10pt]
v(z,s,0)=u_0(z,s),  \,\qquad\quad\text{for}\,\, (z,s) \in \Omega.
\end{array}
\right.
\end{equation}

Our  result are as follows.
\begin{theorem}\label{3}
 Let $\Omega$ be a bounded $C^{2+\alpha}$ domain for some $0< \alpha < 1$. Let $v \in C^{2+\alpha,1+\alpha/2} (\overline\Omega \times [0, T ])$ be the solution to (\ref{pro4}) and let $u^\epsilon$ be the solution to (\ref{pro3}) with $J^\epsilon$ as above and $J$ satisfying \textit{(H)} and \textit{($\tilde H$)}. Then, there exists $C = C(T )$ such that
\begin{align*}
\sup_{t\in[0,T]}\|u^\epsilon(\cdot,\cdot,t)-v(\cdot,\cdot,t)\|_{L^\infty(\Omega)}\leq C\,\epsilon^\alpha, \qquad\qquad\text{as}\qquad \epsilon\rightarrow0. 
\end{align*} 
\end{theorem}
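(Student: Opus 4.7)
The plan is to mimic the strategy of \cite{CER}: establish a pointwise consistency estimate between the rescaled nonlocal operator and the Heisenberg Laplacian on smooth test functions, and then deduce the theorem by a comparison principle for (\ref{pro3}) applied to the difference $u^\epsilon-\tilde v$, where $\tilde v$ is a smooth extension of $v$ to $\HH_n\times[0,T]$.

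\textbf{Consistency.} The heart of the proof is the estimate
\begin{equation*}
\left|\frac{1}{\epsilon^{2}}(J^{\epsilon}\ast\phi)(z,s)-\phi(z,s)-L\phi(z,s)\right|\le C\epsilon^{\alpha}\|\phi\|_{C^{2+\alpha}}
\end{equation*}
uniformly in $(z,s)\in\Omega$, for $\phi\in C^{2+\alpha}$ on a neighbourhood of $\overline\Omega$. After the change of variables $(\zeta,\sigma)=(\tilde z/\epsilon,\tilde s/\epsilon^{2})$ that removes $\epsilon$ from $J^{\epsilon}$, the Heisenberg group law produces $(\epsilon\zeta,\epsilon^{2}\sigma)^{-1}\cdot(z,s)=(z-\epsilon\zeta,\,s-\epsilon^{2}\sigma-\tfrac{\epsilon}{2}\mathrm{Im}\langle\zeta,z\rangle)$, and one Taylor expands $\phi$ to second order in the Euclidean coordinates, then regroups by powers of $\epsilon$. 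Linear contributions integrate to zero against $J$ by the $U(n)$-invariance of $J$ (odd moments of $\zeta$ vanish) and by its symmetry in $s$. The pure quadratic part in $\zeta$ gives $\tfrac{\epsilon^{2}C_{1}}{2}\sum_{j}(\partial_{x_{j}}^{2}+\partial_{y_{j}}^{2})\phi$; the cross term between $\zeta$ and the $\tfrac{\epsilon}{2}\mathrm{Im}\langle\zeta,z\rangle$ piece of the $s$-displacement reconstructs $\tfrac{\epsilon^{2}C_{1}}{2}\partial_{s}\sum_{j}(x_{j}\partial_{y_{j}}-y_{j}\partial_{x_{j}})\phi$; and the square of that correction yields $\tfrac{\epsilon^{2}C_{1}}{8}|z|^{2}\partial_{s}^{2}\phi$. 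The normalising factor $2C_{1}^{-1}$ converts these three pieces into exactly $L\phi$ via (\ref{0.0.0.1}), while the Hölder modulus of the second derivatives controls the remainder by $O(\epsilon^{2+\alpha})$.

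\textbf{Comparison and conclusion.} I would extend $v$ via a standard Whitney/Stein extension to $\tilde v\in C^{2+\alpha,1+\alpha/2}(\HH_n\times[0,T])$ with $\tilde v=v$ on $\overline\Omega\times[0,T]$, and set $w^\epsilon=u^\epsilon-\tilde v$. On $\Omega\times(0,T]$, using $\tilde v_t=v_t=Lv=L\tilde v$,
\begin{equation*}
w^\epsilon_t-\tfrac{1}{\epsilon^2}(J^\epsilon\ast w^\epsilon-w^\epsilon)=-\Bigl[\tfrac{1}{\epsilon^2}(J^\epsilon\ast\tilde v-\tilde v)-L\tilde v\Bigr]=O(\epsilon^\alpha)
\end{equation*}
by the consistency step. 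On the $O(\epsilon)$-thick slab of $\Omega^c$ that lies within the convolution reach of $\Omega$, $|w^\epsilon|=|g-\tilde v|\le C\epsilon$ because $g=v=\tilde v$ on $\partial\Omega$ and both are Lipschitz up to the boundary. A comparison principle for (\ref{pro3}), obtained by inspecting a first interior maximum of $e^{-\lambda t}w^\epsilon$ and using $J^\epsilon\ge 0$, applied to $\pm w^\epsilon$ against the barriers $\pm C(1+t)\epsilon^\alpha$, then yields $\|u^\epsilon-v\|_{L^\infty(\Omega)}\le C(T)\epsilon^\alpha$.

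\textbf{Main obstacle.} The delicate point is the boundary analysis: inside the convolution reach, $u^\epsilon=g$ while the extension $\tilde v$ is built from $v|_{\overline\Omega}$ and matches $g$ pointwise only on $\partial\Omega$, so one must invoke joint regularity of $g$ and $v$ (implicit in the $C^{2+\alpha,1+\alpha/2}$ hypothesis) to bound $|g-\tilde v|$ on that slab. The Taylor bookkeeping in the consistency step is also subtle because the non-abelian group law injects an order-$\epsilon$ contribution into the $s$-slot, forcing terms to be grouped by degree in the anisotropic dilation $\delta_\epsilon$ rather than by Euclidean degree; one can easily lose the precise cancellation that reassembles $L\phi$.
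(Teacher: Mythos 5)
Your proposal follows essentially the same route as the paper: extend $v$ to $\tilde v$, prove the consistency estimate $\tilde L_\epsilon \tilde v = L\tilde v + o(\epsilon^\alpha)$ by Taylor expansion using the moment conditions in $(\tilde H)$ (the paper's Lemma \ref{lem4}), control the boundary mismatch $G=o(\epsilon)$ on the $\epsilon$-neighbourhood of $\partial\Omega$, and compare $\pm(u^\epsilon-\tilde v)$ with the barriers $\pm(K_1\epsilon^\alpha t+K_2\epsilon)$ via the comparison principle of Section \ref{sect3.1}. Your bookkeeping of how the group-law correction $\tfrac{\epsilon}{2}\mathrm{Im}\langle\zeta,z\rangle$ reassembles the rotation term and the $\tfrac14|z|^2\partial_s^2$ term of $L$ matches the paper's computation exactly.
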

\begin{remark}
Observe that since the initial data $u_0(z,s)$ is not necessarily invariant by the action of $U(n)$, $L$ is given by the formula \eqref{0.0.0.1} and the solution of problem  \eqref{pro3} approaches to the solution of a more irregular equation, given in \eqref{pro4}.
\end{remark} 

Finally we observe, that if $J$ is symmetric in the variable $s$ and as $J$ is invariant  by the action of $U(n)$, we have
\begin{align}\label{s}
J\left(z-\tilde z,s-\tilde s-\frac12\text{Im}\langle z,\tilde z\rangle\right)&=J\left(\tilde z-z,\tilde s-s-(-\frac12\text{Im}\langle z,\tilde z\rangle)\right)\\
&=J\left(\tilde z-z,\tilde s-s-\frac12\text{Im}\langle \tilde z,z\rangle\right).\nonumber
\end{align}
Then, if we write $K((z,s),(\tilde z,\tilde s))=J\left(z-\tilde z,s-\tilde s-\frac12\text{Im}\langle z,\tilde z\rangle\right)$, $K$ is a non-negative and symmetric Kernel. Therefore Theorem 2 of \cite{CCR} is true for the nonlocal equation defined by the kernel $K$. That is to say that $g(z,s,t)\equiv0$ in \eqref{2} and $J$ is also symmetric in the variable $s$, we find an exponential decay given by the first eigenvalue of an associated problem and the asymptotic behavior of solutions is described by the unique (up to a constant) associated eigenfunction. Let $\lambda_1=\lambda_1(\Omega)$ be given by %{\red{hablar con Uriel ver hojas}}
\begin{equation}\label{aut}
\lambda_1\!=\!\inf_{u\in L^2(\HH^n)}\dfrac{\displaystyle\!\frac12\!\int_{\HH_n}\!\int_{\HH_n}\!J\left((z-\tilde{z},s-\tilde{s}-\frac12\text{Im}\langle z,\tilde{z}\rangle)\right)\!(u(z,s)-u(\tilde{z},\tilde{z}))^2\,dzds\,d\tilde{z}d\tilde{s}}{\displaystyle\int_\Omega(u(z,s)^2)dzds}.
\end{equation}
%The next theorem is true: 
\begin{theorem}\label{2}
Let $u_0 \in L^1(\Omega)\cap L^2 (\Omega)$. Assume that $J$ is continuous, satisfies \textit{(H)} and is symmetric in the variable $s$. Then the solutions of \eqref{pro2}, with $g(z,s,t)\equiv0$, decay to zero as $t \rightarrow\infty$ with an exponential rate
\begin{align*}
\|u(\cdot,\cdot,t)\|_{L^2(\Omega)}\leq \|u_0\|_{L^2(\Omega)}e^{-\lambda_1t}.
\end{align*}
If $u_0$ is continuous, positive and bounded then there exist positive constants $C$ and $\tilde{C}$ such
that
\begin{align*}
\|u(\cdot,\cdot,t)\|_{L^\infty(\Omega)}\leq Ce^{-\lambda_1t},
\end{align*}
and
\begin{align*}
\lim_{t\rightarrow0}\max_{(z,s)}|e^{\lambda_1t}u(z,s,t)-\tilde{C}\phi_1(z,t)|=0,
\end{align*}
where $\phi_1$ is the eigenfunction associated to $\lambda_1$. 
\end{theorem}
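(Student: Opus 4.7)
The plan is to leverage the observation made by the author in \eqref{s}: under the symmetry of $J$ in $s$, the kernel $K((z,s),(\tilde z,\tilde s)) = J(z-\tilde z, s - \tilde s - \frac12 \text{Im}\langle z,\tilde z\rangle)$ is symmetric and non-negative, and the Heisenberg convolution $(J\ast u)(z,s,t)$ can be rewritten as $\int_{\HH_n} K((z,s),(\tilde z,\tilde s))\, u(\tilde z,\tilde s,t)\, d\tilde z\, d\tilde s$. This reduces \eqref{pro2} (with $g\equiv 0$) to a nonlocal symmetric evolution of the form treated in Theorem 2 of \cite{CCR}, and allows me to import the machinery used there.

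For the first inequality, I multiply the equation by $u$, integrate over $\Omega$, and extend $u$ by zero outside $\Omega$. Fubini together with the symmetry of $K$ yields
\[
\frac{d}{dt}\,\frac12\,\|u(\cdot,\cdot,t)\|_{L^2(\Omega)}^2 = -\frac14 \int_{\HH_n}\!\!\int_{\HH_n} K((z,s),(\tilde z,\tilde s))\,(u(z,s,t)-u(\tilde z,\tilde s,t))^2\, dz\, ds\, d\tilde z\, d\tilde s,
\]
and the variational characterization \eqref{aut} applied to $u(\cdot,\cdot,t)$ (extended by zero) gives $\frac{d}{dt}\|u\|_{L^2}^2 \leq -2\lambda_1\|u\|_{L^2}^2$. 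Gronwall's inequality closes the first bound.

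For the $L^\infty$ decay with $u_0$ continuous, positive and bounded, I would invoke a maximum principle that follows from $J\geq 0$ and $J(0,0)>0$ to ensure $u\geq 0$, then use $w(z,s,t)=C\phi_1(z,s)\,e^{-\lambda_1 t}$ as a supersolution. Choosing $C$ large so that $C\phi_1\geq u_0$ on $\Omega$ (possible because the nonlocal Krein--Rutman theorem, applicable since the integral operator with kernel $K$ is compact, symmetric and positivity-improving on $L^2(\Omega)$, produces a strictly positive first eigenfunction $\phi_1$ bounded away from zero on compact subsets), the comparison $u\leq w$ yields the exponential $L^\infty$ bound. For the profile, decompose $u(\cdot,\cdot,t)=a(t)\phi_1 + u^{\perp}(\cdot,\cdot,t)$ orthogonally in $L^2(\Omega)$. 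Since $I-\mathcal{K}$ (with $\mathcal{K}f(z,s)=\int_\Omega K((z,s),(\tilde z,\tilde s))f(\tilde z,\tilde s)\,d\tilde z\,d\tilde s$) is self-adjoint with compact resolvent, it has a discrete spectrum $\lambda_1<\lambda_2\leq\cdots$ with $\lambda_1$ simple; hence $\|e^{\lambda_1 t}u^{\perp}(\cdot,\cdot,t)\|_{L^2(\Omega)}\leq e^{-(\lambda_2-\lambda_1)t}\|u_0^{\perp}\|_{L^2(\Omega)}$, while $a(t)=\langle u_0,\phi_1\rangle/\|\phi_1\|_{L^2}^2 \cdot e^{-\lambda_1 t}$, giving $\tilde C = \langle u_0,\phi_1\rangle/\|\phi_1\|_{L^2}^2$.

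The main obstacle will be upgrading the $L^2$ spectral decay of $e^{\lambda_1 t}u^{\perp}$ to the uniform convergence asserted in the theorem. Unlike the classical heat equation, the nonlocal semigroup has no smoothing effect, so I would rely on the Duhamel representation
\[
u(\cdot,\cdot,t)=e^{-t}u_0+\int_0^t e^{-(t-\tau)}\,(J\ast u)(\cdot,\cdot,\tau)\, d\tau,
\]
and use the continuity and $L^1$-integrability of $J$ to convert $L^2$ control of the remainder into $L^\infty$ control after one convolution, exactly as in the corresponding step of \cite{CCR}. Together with the simplicity of $\lambda_1$ and strict positivity of $\phi_1$, which require the nonlocal strong maximum principle adapted to the twisted-translation kernel $K$, this yields the stated asymptotic profile.
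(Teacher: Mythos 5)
Your proposal reaches the same conclusions as the paper, and for the first two assertions it is essentially the paper's own argument: the $L^2$ decay is obtained exactly as in the paper (symmetry of the kernel as in \eqref{s}, the energy identity, the variational characterization \eqref{aut}, Gronwall), and your comparison with the explicit supersolution $C\phi_1 e^{-\lambda_1 t}$ is the same mechanism the paper packages into its monotone quantity $C^*(t)$. Where you genuinely diverge is the asymptotic profile: the paper rescales $v=e^{\lambda_1 t}u$, introduces the monotone quantities $C^*(t)$ and $C_*(t)$, and runs a compactness/$\omega$-limit-set argument with $C^*(t)$ as a Lyapunov function, following Theorem 2 of \cite{CCR}, producing an unspecified constant $\tilde C$; you instead project onto $\phi_1$ and its orthogonal complement, use a spectral gap for the self-adjoint operator $u\mapsto u-J\ast u$ on $L^2(\Omega)$, and then upgrade $L^2$ convergence to uniform convergence by one Duhamel/convolution step (legitimate here because $J$ is continuous and $\Omega$ bounded, so convolution maps $L^2(\Omega)$ into $L^\infty(\Omega)$). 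Your route gives more: an explicit constant $\tilde C=\langle u_0,\phi_1\rangle/\|\phi_1\|_{L^2(\Omega)}^2$ and an exponential rate of convergence; the paper's route avoids all spectral bookkeeping. Both need the same input proved in the paper's eigenvalue proposition: $0<\lambda_1<1$, simplicity of $\lambda_1$, and strict positivity of $\phi_1$ with positive continuous extension to $\overline\Omega$.

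Three points need repair, though none is fatal. First, the energy identity should carry the factor $-\tfrac12$, not $-\tfrac14$; as written it only yields $\frac{d}{dt}\|u\|_{L^2}^2\le-\lambda_1\|u\|_{L^2}^2$ and hence the weaker rate $e^{-\lambda_1 t/2}$, whereas with $-\tfrac12$ your stated inequality $\frac{d}{dt}\|u\|_{L^2}^2\le-2\lambda_1\|u\|_{L^2}^2$ follows. Second, $I-\mathcal{K}$ does \emph{not} have compact resolvent and its spectrum is not discrete: $\mathcal{K}$ is compact, so the essential spectrum of $I-\mathcal{K}$ is $\{1\}$ and its eigenvalues accumulate only there. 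What your argument actually needs, and what is true, is that $\lambda_1=\min\sigma(I-\mathcal{K})$ is an isolated \emph{simple} eigenvalue (since $1-\lambda_1>0$ is the largest eigenvalue of the compact self-adjoint $\mathcal{K}$), so that on $\{\phi_1\}^{\perp}$ one has $\langle (I-\mathcal{K})w,w\rangle\ge\lambda_2\|w\|^2$ with $\lambda_2:=\inf\bigl(\sigma(I-\mathcal{K})\setminus\{\lambda_1\}\bigr)>\lambda_1$; the exponential estimate on $u^{\perp}$ then stands. Third, with $J$ of compact support the operator $\mathcal{K}$ is not positivity-improving in one application, so Krein--Rutman must be invoked in its irreducible form, or one proves strict positivity and simplicity by the chaining/connectedness argument the paper uses; moreover, for the comparison step you need $\phi_1$ bounded below by a positive constant on all of $\Omega$ (its continuous extension is positive on $\overline\Omega$), not merely on compact subsets, and your Duhamel upgrade silently uses $\lambda_1<1$ so that the factors $e^{-(1-\lambda_1)t}$ decay. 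With these corrections your scheme is complete and in fact strengthens the stated result.
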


We consider next the Neumann boundary conditions: 
\begin{equation}\label{pro5}
\left\{
\begin{array}
[c]{l}%
u_t(z,s,t)=\displaystyle\int_{\Omega}J(z-\tilde z,s-\tilde s-\frac12\text{Im}\langle z, \tilde z\rangle)[u(\tilde z, \tilde s,t)-u(z,s,t)]d\tilde zd\tilde s,\\% \quad (z,s) \in \Omega, \,\,  t>0,\\[10pt]
u(z,s,0)=u_0(z,s).% \qquad\qquad\qquad\qquad\qquad\qquad\qquad\qquad\qquad\qquad\quad (z,s) \in \Omega.
\end{array}
\right.
\end{equation}
If we impose that  $J$ is symmetric in the variable $s$ by equation \eqref{s} the Theorem 3 of \cite{CCR} is true. And, in this case, we find that the asymptotic behavior is given by an exponential decay determined by an eigenvalue problem. Let $\beta_1$ be given by:
\begin{align}\label{beta}
\beta_1=\inf_{u\in L^2(\Omega),\,\int u=0}\frac{\displaystyle\frac12\int_{\Omega}\int_{\Omega}J(z-\tilde z,s-\tilde s-\frac12\text{Im}\langle z, \tilde z\rangle)[u(\tilde z, \tilde s)-u(z,s)]^2d\tilde zd\tilde sdzds}{\int_\Omega (u(z,s))^2dzds}.
\end{align}
%Therefore the next theorem is true:
\begin{theorem}\label{4}
Let $J$ be a continuous kernel symmetric in the variable $s$ that satisfies \textit{(H)}. For every $u_0 \in L^1(\Omega)$ there exists a unique solution $u$ of \eqref{pro5} such that $u \in C([0,\infty); L^1 (\Omega))$. This
solution preserves the total mass in $\Omega$:
$$
\int_{\Omega} u(z,s,t)\,dzds=\int_{\Omega} u_0(z,s)\,dzds.
$$
Moreover, let $\mathcal M=\displaystyle\frac{1}{|\Omega|}\int_\Omega u_0(z,s)\,dzds$. Then the asymptotic behavior of solutions of \eqref{pro5} is described as follows: if $u_0 \in L^2(\Omega)$,
\begin{align*}
\|u(\cdot,\cdot,t)-\mathcal M\|_{L^2(\Omega)}\leq e^{-\beta_1t}\|u_0-\mathcal M\|_{L^2(\Omega)},
\end{align*}
and if $u_0$ is continuous and bounded there exists a positive constant $C$ such that
\begin{align*}
\|u(\cdot,\cdot,t)-\mathcal M\|_{L^\infty(\Omega)}\leq Ce^{-\beta_1t}.
\end{align*}
\end{theorem}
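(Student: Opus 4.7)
Thanks to the symmetry identity \eqref{s}, the equation in \eqref{pro5} can be recast as
\begin{equation*}
u_t(z,s,t)=\int_\Omega K\bigl((z,s),(\tilde z,\tilde s)\bigr)\bigl[u(\tilde z,\tilde s,t)-u(z,s,t)\bigr]\,d\tilde z\,d\tilde s,
\end{equation*}
where $K((z,s),(\tilde z,\tilde s)):=J(z-\tilde z,s-\tilde s-\tfrac12\text{Im}\langle z,\tilde z\rangle)$ is nonnegative, continuous, symmetric, and bounded on $\Omega\times\Omega$ because $\Omega$ is bounded and $J$ is continuous. In this form \eqref{pro5} has exactly the structure of the Euclidean Neumann problem treated in Theorem 3 of \cite{CCR}, so the overall strategy is to reproduce the four conclusions of that theorem in our setting and verify only what truly depends on the Heisenberg geometry.

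\textbf{Existence, uniqueness, mass conservation.} I would run a Banach contraction argument in $C([0,T];L^1(\Omega))$ for the integral formulation
\begin{equation*}
\mathcal T u(z,s,t):=u_0(z,s)+\int_0^t\!\int_\Omega K\bigl((z,s),(\tilde z,\tilde s)\bigr)\bigl[u(\tilde z,\tilde s,\tau)-u(z,s,\tau)\bigr]\,d\tilde z\,d\tilde s\,d\tau;
\end{equation*}
the bound $\|K\|_{L^\infty(\Omega\times\Omega)}<\infty$ makes $\mathcal T$ a contraction for $T$ small, and time iteration yields a unique global $L^1$-solution. Integrating the equation over $\Omega$ and interchanging the two $\Omega$-integrals using $K((z,s),(\tilde z,\tilde s))=K((\tilde z,\tilde s),(z,s))$ shows $\tfrac{d}{dt}\int_\Omega u=0$, so $\int_\Omega u(\cdot,\cdot,t)=\int_\Omega u_0$.

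\textbf{$L^2$ and $L^\infty$ decay.} Constants solve the equation, so $w:=u-\mathcal M$ also solves it and has zero mean on $\Omega$ by the previous step. Multiplying by $w$, integrating and symmetrising in $(z,s)\leftrightarrow(\tilde z,\tilde s)$ yields
\begin{equation*}
\tfrac12\tfrac{d}{dt}\|w\|_{L^2(\Omega)}^2=-\tfrac12\int_\Omega\!\int_\Omega K\bigl(w(\tilde z,\tilde s,t)-w(z,s,t)\bigr)^2\,d\tilde z\,d\tilde s\,dz\,ds\le -\beta_1\|w\|_{L^2(\Omega)}^2,
\end{equation*}
the last inequality being \eqref{beta} applied to the admissible function $w$; Gr\"onwall then gives the claimed $L^2$ estimate. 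The point I expect to be the main obstacle is the $L^\infty$ decay at the sharp rate $\beta_1$: a maximum-principle argument easily keeps continuous bounded data bounded, but to upgrade the $L^2$-exponential decay to an $L^\infty$-exponential decay with the same exponent one needs an $L^2\!\to\!L^\infty$ smoothing of the semigroup generated by $u\mapsto \int_\Omega K((z,s),(\tilde z,\tilde s))(u(\tilde z,\tilde s)-u(z,s))\,d\tilde z\,d\tilde s$. The proof of this smoothing in \cite{CCR} relies only on the boundedness, continuity, symmetry and near-diagonal positivity of $K$, all of which survive under the Heisenberg twist, so the same argument transports essentially verbatim.
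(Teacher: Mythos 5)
Your plan matches the paper's proof step for step: the paper likewise rewrites \eqref{pro5} in terms of the symmetric kernel $K$, gets existence, uniqueness and mass conservation from Banach's fixed point theorem, derives the $L^2$ decay by the symmetrization identity together with the variational characterization \eqref{beta}, and then imports the $L^\infty$ argument of Theorem 3 of \cite{CCR}. The only point worth naming explicitly is that the ``$L^2\to L^\infty$'' step is not a smoothing estimate but the pointwise Duhamel formula $w(z,s,t)=e^{-A(z,s)t}w_0(z,s)+e^{-A(z,s)t}\int_0^t e^{A(z,s)r}\int_\Omega K\,w\,d\tilde z\,d\tilde s\,dr$ with $A(z,s)=\int_\Omega K\,d\tilde z\,d\tilde s$, combined with Cauchy--Schwarz, the $L^2$ decay, and the test-function lemma $\beta_1\le\min_{\overline\Omega}A$ --- all of which, as you say, survive the Heisenberg twist.
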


The rest of the paper is organized as follows: in Section \ref{sect2}, we prove existence and
uniqueness of the Cauchy problem given by (\ref{pro1}) and we also prove Theorem \ref{1}. In Section \ref{sect3} we prove existence, uniqueness and a comparison principle of the Dirichlet problem given by (\ref{pro2}), and we also prove the convergence result for Dirichlet problem, Theorem \ref{3}.

\section{The Cauchy problem}

\label{sect2}

In this section, we will use that the function $u_0$ is invariant by the action of $U(n)$, this allows us to use the spherical transform of $\HH_n$ in order to obtain explicit solutions of Cauchy problem \eqref{pro1}. 

%{\blue{ver lemma 2.1 de \cite{CCR}, cual es el espacio del teorema?.. Si se usa y vale la pena poder algo an\'alogo}}

\begin{theorem}\label{2.0}
Let $u_0$ in $L^1(\HH_n)^{U(n)}$ and $\widehat{u_0}$ in $L^1(\Sigma)$. Let $J$ satisfy \textit{(H)}. Then there exists a unique solution $u\in C^0([0,\infty),L^1(\HH_n))$ of problem \eqref{pro1} and it is given by:
$$
\widehat{u}(\lambda,k,t)=e^{(\widehat{J}(\lambda,k)-1)t}\widehat{u_0}(\lambda,k).
$$
\end{theorem}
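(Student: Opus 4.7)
The plan is to treat \eqref{pro1} as a linear ODE in the Banach space $L^1(\HH_n)$ and then read off the explicit formula by applying the spherical transform. Define the bounded linear operator $T\colon L^1(\HH_n)\to L^1(\HH_n)$ by $Tu=J\ast u-u$. By Young's inequality for convolution on the (unimodular) Heisenberg group and hypothesis \textit{(H)},
\begin{equation*}
\|J\ast u\|_{L^1(\HH_n)}\le \|J\|_{L^1(\HH_n)}\|u\|_{L^1(\HH_n)}=\|u\|_{L^1(\HH_n)},
\end{equation*}
so $\|T\|\le 2$. Standard semigroup theory then gives a unique global mild solution $u\in C^0([0,\infty),L^1(\HH_n))$ of \eqref{pro1}, explicitly represented by the norm-convergent series
\begin{equation*}
u(\cdot,\cdot,t)=e^{tT}u_0=e^{-t}\sum_{k\ge 0}\frac{t^k}{k!}\,J^{\ast k}\ast u_0,
\end{equation*}
where $J^{\ast k}$ denotes the $k$-fold convolution of $J$ with itself. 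Uniqueness in $C^0([0,\infty),L^1(\HH_n))$ is inherited from the Lipschitz character of $T$ via the usual Gronwall argument.

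Next I would observe that $U(n)$-invariance is preserved under convolution with a $U(n)$-invariant kernel, so each $J^{\ast k}\ast u_0\in L^1(\HH_n)^{U(n)}$, and hence $u(\cdot,\cdot,t)\in L^1(\HH_n)^{U(n)}$ for every $t\ge 0$. This legitimizes the use of the spherical transform associated with the Gelfand pair $(\HH_n,U(n))$. Taking the transform \eqref{0.1.1} of both sides of the equation and using the multiplicativity $\widehat{J\ast u}(\lambda,k)=\widehat{J}(\lambda,k)\,\widehat{u}(\lambda,k)$ (which follows because the spherical functions $\varphi_{\lambda,k}$ are characters of the commutative convolution algebra $L^1(\HH_n)^{U(n)}$), the PDE collapses into the scalar, pointwise ODE
\begin{equation*}
\partial_t\widehat{u}(\lambda,k,t)=\bigl(\widehat{J}(\lambda,k)-1\bigr)\widehat{u}(\lambda,k,t),\qquad \widehat{u}(\lambda,k,0)=\widehat{u_0}(\lambda,k),
\end{equation*}
whose unique solution is $\widehat{u}(\lambda,k,t)=e^{(\widehat{J}(\lambda,k)-1)t}\widehat{u_0}(\lambda,k)$. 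Since $|\widehat{J}(\lambda,k)|\le\|J\|_{L^1}=1$, one has $|\widehat{u}(\lambda,k,t)|\le|\widehat{u_0}(\lambda,k)|$, so $\widehat{u}(\cdot,t)\in L^1(\Sigma)$ and the inversion formula \eqref{0.1.2} recovers $u(\cdot,\cdot,t)$ from this transform, consistent with the exponential series above.

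The main obstacle is essentially bookkeeping: one must justify interchanging $\partial_t$ with the spherical transform (straightforward because the series defining $u(\cdot,\cdot,t)$ converges in $L^1(\HH_n)$ together with its time derivative) and verify multiplicativity of the spherical transform in the form stated, which relies on the Gelfand-pair structure recalled in Section \ref{sect-intro} and the fact that both $J$ and $u(\cdot,\cdot,t)$ belong to $L^1(\HH_n)^{U(n)}$. No estimate beyond Young's inequality and $\|\varphi_{\lambda,k}\|_\infty=1$ (cf.\ \eqref{0.1.3}) is required.
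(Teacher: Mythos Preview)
Your proposal is correct and follows the same core idea as the paper: apply the spherical transform, reduce to the scalar ODE $\partial_t\widehat{u}=(\widehat{J}-1)\widehat{u}$, solve it, and invert. The paper's proof is in fact terser than yours: it simply transforms, solves, and observes that $e^{(\widehat J-1)t}$ is bounded so that the inverse transform applies; your additional step of realizing the evolution as $e^{tT}$ for the bounded operator $T=J\ast(\cdot)-\mathrm{Id}$ on $L^1(\HH_n)$ supplies the existence and uniqueness in $C^0([0,\infty),L^1(\HH_n))$ that the paper leaves implicit, and also cleanly justifies the interchange of $\partial_t$ with the transform.
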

\begin{proof}
First observe that since $\displaystyle\int_{\mathbb{H}_{n}}J(z,s)\,dzds=1$, then $\widehat{J}\in C_0(\Sigma)$ and $\widehat{J}(0,0)=1$.
% {\blue{preguntar si el argumento para ver que $e^{(\widehat{J}-1)t}$ es acotado para cada $t$ es valido}}

We have
$$u_t(z,s,t)=J\ast u(z,s,t)-u(z,s,t).$$
Applying the spherical transform to this equation, we obtain:
$$\widehat{u}_t(\lambda,k,t)=(\widehat{J}(\lambda,k)-1)\widehat{u}(\lambda,k,t).$$
Hence,
$$\widehat{u}(\lambda,k,t)=e^{(\widehat{J}(\lambda,k)-1)t}\widehat{u}_0(\lambda,k).$$
Since $\widehat{u_0} \in L^1 (\Sigma )$ and $e^{(\widehat{J}-1)t}$ is continuous and bounded, the result follows by taking the inverse of the spherical transform.
\end{proof}

\begin{lemma}
Let $J\in \mathcal{S}(\HH_n)^{U(n)}$ satisfy \textit{(H)} and $u_0=\delta_0$ (the Dirac delta in $\HH_n$). Then the fundamental solution of \eqref{pro1} can be decomposed as
$$
w(z,s,t)=e^{-t}\delta_0+\nu(z,s,t),
$$
with $\nu(z,s,t)$ smooth. Moreover, if $u$ is a solution of \eqref{pro1} with initial condition a function $u_0$ invariant by the action of $U(n)$, it can be written as
$$
u(z,s,t)=w\ast u_0(z,s,t).
$$
\end{lemma}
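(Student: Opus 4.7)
The natural strategy is to work entirely on the Gelfand--Plancherel side, where the preceding theorem has already given the explicit formula $\widehat u(\lambda,k,t)=e^{(\widehat J(\lambda,k)-1)t}\widehat{u_0}(\lambda,k)$ for the solution of \eqref{pro1}. Applied formally to $u_0=\delta_0$, whose spherical transform is $\widehat{\delta_0}(\lambda,k)=\varphi_{\lambda,k}(0,0)=L_k^{n-1}(0)=1$, this yields $\widehat w(\lambda,k,t)=e^{(\widehat J(\lambda,k)-1)t}$. I would then split this multiplier as
\begin{equation*}
\widehat w(\lambda,k,t)=e^{-t}+e^{-t}\bigl(e^{t\widehat J(\lambda,k)}-1\bigr)=:\widehat{e^{-t}\delta_0}(\lambda,k)+\widehat\nu(\lambda,k,t),
\end{equation*}
so that proving the decomposition reduces to identifying $\widehat\nu(\cdot,\cdot,t)$ as the spherical transform of a smooth $U(n)$-invariant function.

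The smoothness of $\nu$ would come from the rapid decay of $\widehat J$ on $\Sigma$. Because $J\in\mathcal{S}(\HH_n)^{U(n)}$, every $L^aS^bJ$ lies in $L^1(\HH_n)$, and the relations \eqref{0.1.0} give
\begin{equation*}
\widehat{L^aS^bJ}(\lambda,k)=(-1)^a(|\lambda|(2k+n))^a(-i\lambda)^b\,\widehat J(\lambda,k),
\end{equation*}
so any polynomial in $|\lambda|(2k+n)$ and $\lambda$ times $\widehat J$ is bounded on $\Sigma$. Combined with the elementary bound $|e^{t\widehat J}-1|\le t\,|\widehat J|\,e^t$ (using $|\widehat J|\le\|J\|_{L^1}=1$), this makes $\widehat\nu(\cdot,\cdot,t)$ rapidly decreasing on the spectrum, and in particular integrable against the measure $|\lambda|^n\,d\lambda$ times any such polynomial. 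Applying $D\in\mathcal{U}(\mathfrak h_n)^{U(n)}$ inside the inversion formula \eqref{0.1.2} simply moves such a polynomial factor onto $\widehat\nu$, so $D\nu(\cdot,\cdot,t)$ is continuous for every $D$, giving smoothness in $(z,s)$; smoothness in $t$ then follows by differentiation under the integral of the explicit exponential expression for $\widehat\nu$.

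For the convolution formula, given $u_0\in L^1(\HH_n)^{U(n)}$ with $\widehat{u_0}\in L^1(\Sigma)$, I would set $\tilde u(\cdot,\cdot,t):=e^{-t}u_0+\nu(\cdot,\cdot,t)\ast u_0$. Since convolution on the commutative algebra $L^1(\HH_n)^{U(n)}$ is diagonalised by the spherical transform, $\widehat{\tilde u}(\lambda,k,t)=e^{-t}\widehat{u_0}(\lambda,k)+\widehat\nu(\lambda,k,t)\widehat{u_0}(\lambda,k)=e^{(\widehat J(\lambda,k)-1)t}\widehat{u_0}(\lambda,k)$, which is precisely the spherical transform of the unique solution $u$ of \eqref{pro1} furnished by the previous theorem. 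Injectivity of the inverse spherical transform forces $u=\tilde u$, i.e.\ $u=w\ast u_0$. The main obstacle is the smoothness step: one has to justify differentiation under the spherical inversion integral to arbitrary order, which requires careful bookkeeping of the joint decay in the continuous variable $\lambda$ and the discrete variable $k$ against the Plancherel measure. Once that is secured, the remaining identifications are formal manipulations on the spectral side.
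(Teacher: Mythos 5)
Your proposal is correct and follows essentially the same route as the paper: the same splitting $\widehat w=e^{-t}+e^{-t}\bigl(e^{t\widehat J}-1\bigr)$, integrability of $e^{t\widehat J}-1$ against the Plancherel measure via the elementary bound $|e^{t\widehat J}-1|\le Ct\,|\widehat J|$, smoothness by applying the generators of $\mathcal{U}(\mathfrak h_n)^{U(n)}$ under the inversion formula so that polynomial factors in $|\lambda|(2k+n)$ and $\lambda$ land on $\widehat\nu$, and the convolution identity by multiplicativity and injectivity of the spherical transform. The one genuine difference is the mechanism for the decay of $\widehat J$ on $\Sigma$: the paper invokes the Astengo--Di Blasio--Ricci theorem to write $\widehat J(\lambda,k)=g(\lambda,|\lambda|(2k+n))$ with $g\in S(\RR^2)$ and uses the decay of $g$ in its second variable, whereas you derive the bound $(|\lambda|(2k+n))^a|\lambda|^b\,|\widehat J(\lambda,k)|\le\|L^aS^bJ\|_{L^1}$ directly from \eqref{0.1.0} and $\|\varphi_{\lambda,k}\|_\infty=1$. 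Your route is more self-contained (it avoids the structure theorem of \cite{ABR}) and yields exactly the decay needed, since $|\lambda|^b\le(|\lambda|(2k+n))^b$ makes the decay in the single variable $|\lambda|(2k+n)$ control both the $\lambda$-integral and the sum over $k$; the paper's route buys the stronger statement that $\widehat J$ extends to a Schwartz function of two real variables, which is more than the lemma requires. In both versions the final passage from continuity of $L^r\nu$ and $S^r\nu$ to $C^\infty$ regularity rests on hypoellipticity of $L$, which you, like the paper, leave implicit.
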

\begin{proof}
By the previous result, we have
$$\widehat{w}_t(\lambda,k,t)=(\widehat{J}(\lambda,k)-1)\widehat{w}(\lambda,k,t).$$
Hence $\widehat{\delta_0}=1$, in the sense of distributions, we have
$$\widehat{w}(\lambda,k,t)=e^{(\widehat{J}(\lambda,k)-1)t}=e^{-t}(e^{\widehat{J}(\lambda,k)t}-1)+e^{-t}.$$
Now let us prove that, for each fixed $t$, $e^{\widehat{J}(\lambda,k)t}-1\in L^1(\Sigma)$. By the mean value theorem
\begin{align*}
\sum_{k\in\NN}\int_\RR |e^{\widehat{J}(\lambda,k)t}-1|\,|\lambda|^nd\lambda&=\sum_{k\in\NN}\int_\RR |\widehat{J}(\lambda,k)te^{\widehat{J}(\lambda',k')t}|\,|\lambda|^nd\lambda\\
&\leq C\sum_{k\in\NN}\int_\RR |\widehat{J}(\lambda,k)t|\,|\lambda|^nd\lambda
\end{align*}
By \cite{ABR}  exist a function $g\in S(\RR^2)$ such that $\widehat{J}(\lambda,k)=g(\lambda,|\lambda|(2k+n))$ then
\begin{align*}
\sum_{k\in\NN}\int_\RR |e^{\widehat{J}(\lambda,k)t}-1|\,|\lambda|^nd\lambda&\leq C\sum_{k\in\NN}\int_\RR |g(\lambda,|\lambda|(2k+n)t)|\,|\lambda|^nd\lambda\\
&=C \frac1{t^{n+1}}\sum_{k\in\NN}\int_\RR \left|g\left(\frac{\eta}{(2k+n)t},\eta\right)\right|\,\frac{|\eta|^n}{(2k+n)^{n+1}}d\eta
\end{align*}
As $g$ belongs in  $S(\RR^2)$ exist constants $C_1$ and $C_2$ such that $|g(x,y)|\leq C_1$ and $|g(x,y)|\leq \frac{C_2}{|y|^{n+2}}$, then
\begin{align*}
\sum_{k\in\NN}&\int_\RR |e^{\widehat{J}(\lambda,k)t}-1|\,|\lambda|^nd\lambda\\
&\leq C\frac1{t^{n+1}}\sum_{k\in\NN} \frac{1}{(2k+n)^{n+1}}\left[\int_{|\eta|\leq1} C_1\,|\eta|^{n}d\eta  + \int_{|\eta|>1} \frac{C_2}{|\eta|^{2}}\,d\eta       \right]<\infty
\end{align*}
Therefore the first part of the lemma follows applying the inverse spherical transform. 

Note that since $J$ and $u_0$ are invariant by the action of $U(n)$ it is enough to show that there exist $L^{(r)}(\nu)$ and $S^{r}(\nu)$, for all $r\in\NN$, to prove that  $\nu\in C^{\infty}(\HH_n)^{U(n)}$. this is shown similarly to the previous account using \eqref{0.1.0}.

To finish the proof, we observe, that 
$$\widehat{w\ast u_0}(\lambda,k,t)=\widehat{w}(\lambda,k,t)\widehat{u_0}(\lambda,k)=e^{(\widehat{J}(\lambda,k)-1)t}\widehat{u_0}(\lambda,k).$$
By Theorem \eqref{2.0} the solution of problem \eqref{pro1} satisfies 
$$\widehat{u}(\lambda,k,t)=e^{(\widehat{J}(\lambda,k)-1)t}\widehat{u}_0(\lambda,k).$$
Then the result is followed since the spherical transform is injective.
%{\red{ver redacci\'on}}	
\end{proof}

Next we will prove the asymptotic behavior for the nonlocal diffusion equation \eqref{pro1}.
\begin{proof}[Proof of Theorem \ref{1}]
We remark that from our hypotheses on $J$, 
\begin{align*}
&\widehat{J}(\lambda,k)=1-|\lambda|(2k+n)+o(|\lambda|(2k+n)),\quad \text{with }\,\lim_{|\lambda|(2k+n)\longrightarrow0}\frac{o(|\lambda|(2k+n))}{|\lambda|(2k+n)}=0 .
\end{align*}
We have that,
\begin{align}\label{q1}
\hat{J}(\lambda,k)\leq 1-|\lambda|(2k+n)+|\lambda|(2k+n)h(|\lambda|(2k+n)),
\end{align}
where $h$ is a bounded positive function and $\lim_{|\lambda|(2k+n)\longrightarrow 0}h(|\lambda|(2k+n))=0.$ We recall that $|\hat{J}(\lambda,k)|\leq1$, then there exist a number $\xi>0$ and constants $D>0$ and $E>0$ such that 
\begin{align}\label{q2}
&\hat{J}(\lambda,k)\leq 1-D|\lambda|(2k+n),\qquad\text{if}\qquad|\lambda|(2k+n)\leq \xi,\\
&\quad\hat{J}(\lambda,k)\leq 1-E,\qquad\qquad\qquad\text{if}\qquad|\lambda|(2k+n)> \xi.
\end{align}

As in the proof of the  Theorem \eqref{2.0}, we have
$$
\widehat{u}(\lambda,k,t)=e^{(\widehat{J}(\lambda,k)-1)t}\widehat{u}_0(\lambda,k).
$$
On the other hand, let $v(z,s, t)$ be a solution of the heat Heisenberg equation, with the same initial datum $v(z,s,0)=u_0(z,s)$. Taking the spherical transform and by equations \eqref{0.1.0} and \eqref{0.2} we get
$$
\widehat{v}(\lambda,k,t)=e^{-|\lambda|(2k+n)t}\widehat{u}_0(\lambda,k).
$$
Then, by \eqref{0.1.2} and \eqref{0.1.3}, we have 
\begin{align*}
|u(z,s,t)-v(z,s,t)|&=\left|\sum_{k\geq0}\int_{-\infty}^{\infty} (\widehat{u}-\widehat{v})(\lambda,k,t)\varphi_{\lambda,k}(z,s)|\lambda|^{n} d\lambda\right|\\
                   &\leq\sum_{k\geq0}\int_{-\infty}^{\infty} \left|\left(e^{(\widehat{J}(\lambda,k)-1)t}-e^{-|\lambda|(2k+n)t}\right)\widehat{u}_0(\lambda,k)\right||\lambda|^{n} d\lambda.              
                    \end{align*}
We decompose the equation in two parts, when $|\lambda|(2k+n)\sqrt{t}\geq 1$ and $|\lambda|(2k+n)\sqrt{t}< 1$.%, with $r(t)$ a function to choose, 
\begin{align*}
&|u(z,s,t)\!-\!v(z,s,t)|\!\leq\! \sum_{k\geq 0}\int_{|\lambda|\geq\frac{1}{(2k+n)\sqrt{t}}}\! \left|\left(e^{(\widehat{J}(\lambda,k)-1)t}-e^{-|\lambda|(2k+n)t}\right)\widehat{u}_0(\lambda,k)\right|\!|\lambda|^{n} d\lambda\\
                   &\qquad\qquad + \sum_{k\geq 0}\int_{|\lambda|< \frac{1}{(2k+n)\sqrt{t}}} \left|\left(e^{(\widehat{J}(\lambda,k)-1)t}-e^{-|\lambda|(2k+n)t}\right)\widehat{u}_0(\lambda,k)\right||\lambda|^{n} d\lambda\\
                   &\qquad:=I+II.
\end{align*}
First we work with $I$, 
\begin{align*}
I&\leq\sum_{k\geq 0}\int_{|\lambda|>\frac{1}{(2k+n)\sqrt{t}}} \left|\left(e^{(\widehat{J}(\lambda,k)-1)t}-e^{-|\lambda|(2k+n)t}\right)\widehat{u}_0(\lambda,k)\right||\lambda|^{n} d\lambda\\
 &\leq \sum_{k\geq 0}\int_{|\lambda|>\frac{1}{(2k+n)\sqrt{t}}} \left|e^{(\widehat{J}(\lambda,k)-1)t}\widehat{u}_0(k,\lambda)\right||\lambda|^{n} d\lambda\\
 &\qquad+ \sum_{k\geq 0}\int_{|\lambda|>\frac{1}{(2k+n)\sqrt{t}}} \left|e^{-|\lambda|(2k+n)t}\widehat{u}_0(\lambda,k)\right||\lambda|^{n} d\lambda\\
 &:=I_1+I_2.
\end{align*}
For $I_2$, we make the change of variables $\lambda(2k+n) t=\eta$, then $|\lambda|(2k+n) t=|\eta|$ and $d\lambda(2k+n) t=d\eta$, and
\begin{align*}
t^{n+1}I_2&=t^{n+1}\sum_{k\geq 0}\int_{|\lambda|>\frac{1}{(2k+n)\sqrt{t}}} \left|e^{-|\lambda|(2k+n)t}\widehat{u}_0(\lambda,k)\right||\lambda|^{n} d\lambda\\
%&\leq \|\widehat{u}_0\|_\infty\sum_{k\geq 0}\frac{1}{(2k+n)^{n+1}}\int_{|\eta|>\frac1{\sqrt{t}}t} e^{-|\eta|}|\eta|^{n} d\eta\\
&\leq\|\widehat{u}_0\|_\infty\left[\int_{|\eta|>\sqrt{t}} e^{-|\eta|}|\eta|^{n} d\eta\right]\sum_{k\geq 0}\frac{1}{(2k+n)^{n+1}}.
\end{align*} 
Note that the sum is finite and by the dominated convergence theorem, $$\lim_{t\rightarrow\infty}t^{n+1}I_2=0.$$
%because $n\geq 1$. Let's ask that
%\begin{align}\label{h1}
%\lim_{t\longrightarrow \infty}\frac1{\sqrt{t}}t=\infty.
%\end{align}
%Then, 

Now, we work with $I_1$. By \eqref{q2} $I_1$ is bounded by
\begin{align*}
I_1&=\sum_{k\geq 0}\int_{|\lambda|>\frac{1}{(2k+n)\sqrt{t}}} \left|e^{(\hat{J}(\lambda,k)-1)t}\hat{u}_0(\lambda,k)\right||\lambda|^{n} d\lambda\\
&=\sum_{k\geq 0}\int_{\frac{\xi}{2k+n}>|\lambda|>\frac{1}{(2k+n)\sqrt{t}}} \left|e^{(\hat{J}(\lambda,k)-1)t}\hat{u}_0(\lambda,k)\right||\lambda|^{n} d\lambda\\
&\qquad+\sum_{k\geq 0}\int_{|\lambda|\geq\frac{\xi}{2k+n}} \left|e^{(\hat{J}(\lambda,k)-1)t}\hat{u}_0(\lambda,k)\right||\lambda|^{n} d\lambda\\
&\leq \sum_{k\geq 0}\int_{\frac{\xi}{2k+n}>|\lambda|>\frac{1}{(2k+n)\sqrt{t}}} \left|e^{-D|\lambda|(2k+n)t}\hat{u}_0(\lambda,k)\right||\lambda|^{n} d\lambda +\|\hat{u}_0\|_{L^1(\Sigma)}e^{-E t}.
\end{align*}
We now make the change of variables $\lambda(2k+n) t=\eta$, and then 
\begin{align*}
 t^{n+1}I_1&\leq \|\hat{u}_0\|_\infty \sum_{k\geq 0}\frac{1}{(2k+n)^{n+1}}\int_{\xi t>|\eta|>\sqrt{t}} e^{-D|\eta|}|\eta|^{n} d\eta +\|\hat{u}_0\|_{L^1(\Sigma)}t^{n+1}e^{-E t}\\
 &\leq \|\hat{u}_0\|_\infty \left[\int_{|\eta|>\sqrt{t}} e^{-D|\eta|}|\eta|^{n} d\eta\right]\sum_{k\geq 0}\frac{1}{(2k+n)^{n+1}} +\|\hat{u}_0\|_{L^1(\Sigma)}t^{n+1}e^{-E t}.
\end{align*}
Therefore $t^{n+1}I_1\rightarrow0$ when $t\rightarrow\infty.$

Finally we will estimate $II$. Again we make the change of variables  $\lambda(2k+n) t=\eta$, if $\sqrt{t}$ is a sufficiently large number, by \eqref{q1}, we have  
\begin{align*}
 t^{n+1}II&= t^{n+1} \sum_{k\geq 0}\int_{|\lambda|<\frac{1}{(2k+n)\sqrt{t}}} \left|\left(e^{[\hat{J}-1+|\lambda|(2k+n)]t}-1\right)e^{-|\lambda|(2k+n)t}\hat{u}_0(k,\lambda)\right||\lambda|^{n} d\lambda\\
 &\leq t^{n+1}\|\hat{u}_0\|_\infty \sum_{k\geq 0}\int_{|\lambda|< \frac{1}{(2k+n)\sqrt{t}}} \left|e^{|\lambda|(2k+n)h(|\lambda|(2k+n))t}-1\right|e^{-|\lambda|(2k+n)t}|\lambda|^{n} d\lambda\\
&\leq Ct^{n+1}\|\hat{u}_0\|_\infty\!  \sum_{k\geq 0}\int_{|\lambda|< \frac{1}{(2k+n)\sqrt{t}}}\! |\lambda|(2k+n)h(|\lambda|(2k+n))te^{-|\lambda|(2k+n)t}|\lambda|^{n} d\lambda\\
&\leq C\|\hat{u}_0\|_\infty  \sum_{k\geq 0}\int_{|\eta|< \sqrt{t}} \frac{1}{(2k+n)^{n+1}}h\left(\frac{|\eta|}{t}\right)e^{-|\eta|}|\eta|^{n+1} d\eta\\
&\leq C\|\hat{u}_0\|_\infty  \left(\sum_{k\geq 0}\frac{1}{(2k+n)^{n+1}}\right)\int_{\RR} h\left(\frac{|\eta|}{t}\right)e^{-|\eta|}|\eta|^{n+1} d\eta.
\end{align*}
Observe that $h(\frac{|\eta|}{t})\rightarrow0$ when $t\rightarrow \infty$. Also $h\left(\frac{|\eta|}{t}\right)e^{-|\eta|}|\eta|^{n+1}\leq \|h\|_\infty|\eta|^{n+1}e^{-|\eta|}$, and then by convergence dominated theorem $t^{n+1}II\rightarrow0$ when $t\rightarrow\infty$.

%Next we can choose $r(t)=\frac{1}{t^{1/2}}$. 
%Observar que $h(\frac{\eta}{t},k)\rightarrow\infty$ cuando $t\rightarrow\infty$, adem\'as la integral esta acotada por $||h||_\infty|\eta|^{n+1}e^{-|\eta|}$ que es integrable y sumable, luego por el teorema de la convergencia dominada al tomar limite cuando $t\rightarrow0$ se cumple $t^{n+1}II\rightarrow\infty$.

Thus we have showed that 
$$
\lim_{t\longrightarrow \infty}t^{n+1}\max_{(z,s)}|u(z,s,t)-v(z,s,t)|=0
$$
since
\begin{align*}
&\lim_{t\longrightarrow \infty}t^{n+1}\max_{(z,s)}|u(z,s,t)-v(z,s,t)|\\
&\leq\lim_{t\longrightarrow \infty}t^{n+1}\sum_{k\geq0}\int_{-\infty}^{\infty} |\hat{u}-\hat{v}|(\lambda,k,t)|\lambda|^{n} d\lambda=0.
\end{align*}

%The existence of function $r(t)$ that meets the condition (\ref{h1}) is proved in Lemma 2.3 of \cite{CCR}.

Now we will prove that the asymptotic profile is given by
\begin{align*}
\lim_{t\rightarrow \infty}\max_{(z,s)}|t^{n+1}u(t^{\frac12}z,ts,t)-G_{u_0}(z,s)|=0,
\end{align*}
where  $G_{u_0}(z,s)$ is the function such that $\widehat{G_{u_0}}(\lambda,k)=e^{-|\lambda|(2k+n)}\widehat{u_0}(0,k)$.

Indeed, we have
\begin{align}\label{qw}
\widehat{v}(t^{-1}\lambda,k,t)=e^{-|\lambda|(2k+n)}\widehat{u_0}(t^{-1}\lambda,k)\rightarrow e^{-|\lambda|(2k+n)}\widehat{u_0}(0,k).
\end{align}
Now, taking the spherical transform and by \eqref{0.1.3} and \eqref{0.1.1}, we get 
\begin{align}\label{we}
t^{n+1}\widehat{\delta_{t}v(\cdot,\cdot,t)}(\lambda,k,t)&=t^{n+1}\int_{\HH_n}v(t^{\frac12}z,ts,t)\varphi_{tt^{-1}\lambda,k}(-z,-s)\,dz\,ds\\ 
\nonumber&=t^{n+1}\int_{\HH_n}v(t^{\frac12}z,ts,t)\varphi_{t^{-1}\lambda,k}(-t^{\frac12}z,-ts)\,dz\,ds\\
\nonumber&=\int_{\HH_n}v(z,s,t)\varphi_{t^{-1}\lambda,k}(-z,-s)\,dz\,ds\\
\nonumber&=\widehat{v}(t^{-1}\lambda,k,t).
\end{align} 
By \eqref{T1}, \eqref{qw} and \eqref{we} we have
\begin{align*}
&\lim_{t\rightarrow \infty}\max_{(z,s)}|t^{n+1}\delta_{t}u(z,s,t)-G_{u_0}(z,s)|\leq\lim_{t\rightarrow \infty}\max_{(z,s)}|t^{n+1}\delta_{t}u(z,s,t)-t^{n+1}\delta_{t}v(z,s,t)|\\
&\qquad\qquad\quad+\lim_{t\rightarrow \infty}\max_{(z,s)}|t^{n+1}\delta_{t}v(z,s,t)-G_{u_0}(z,s)|=0.
\end{align*}

Finally, since $\|v(\cdot,\cdot,t)\|_{\infty}\leq C t^{-(n+1)}$, (see \eqref{0.3}), we have 
$$
\|u(\cdot,\cdot,t)\|_{L^\infty(\HH_n)}\leq Ct^{-(n+1)}.
$$
and by interpolation for $2<p<\infty$,
$$
\|u(\cdot,\cdot,t)\|_{L^p(\HH_n)}\leq \|u(\cdot,\cdot,t)\|_{L^2(\HH_n)}^{\frac2p}\|u(\cdot,\cdot,t)\|_{L^\infty(\HH_n)}^{\frac{p-2}{p}}.
$$
As \eqref{pro1} preserves the $L^2(\HH_n)$ norm, because it is the solution given through the spherical transform, we have
$$
\|u(\cdot,\cdot,t)\|_{L^p(\HH_n)}\leq C t^{-(n+1)(\frac{p-2}{p})}.
$$

\end{proof}

\section{The Dirichlet problem}
\label{sect3}

\subsection{Existence and properties of solutions}\label{sect3.1}

\

%Before prove Theorem \eqref{2}, 
We shall first derive the existence and uniqueness of
solutions of \eqref{pro2}, which is a consequence of Banach's fixed point theorem. The main arguments are basically the same of \cite{CCR} or \cite{CER}, but we write them here to make the paper self-contained. 
\begin{theorem}\label{31}
Let $u_0\in L^1(\Omega)$ and be $J$ a kernel that verifies \textit{(H)} and \textit{($\tilde H$)}. Then there exists a unique solution $u$ of \eqref{pro2} such that $u\in C([0,\infty),L^1(\Omega))$.
\end{theorem}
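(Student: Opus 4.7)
The strategy is a standard Banach fixed-point argument, following the blueprint of \cite{CCR} and \cite{CER}. Since the Haar measure on $\HH_n$ is Lebesgue measure and Young's inequality for group convolution retains the form $\|f \ast h\|_{L^1(\HH_n)} \leq \|f\|_{L^1(\HH_n)} \|h\|_{L^1(\HH_n)}$, no new ingredient is required beyond what works in $\RR^n$.

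First, I would recast \eqref{pro2} in mild (integral) form. For $u \in X_{t_0} := C([0,t_0], L^1(\Omega))$, let $\tilde u : \HH_n \times [0,t_0] \to \RR$ be the extension defined by $\tilde u = u$ on $\Omega$ and $\tilde u = g$ on $\HH_n \setminus \Omega$, and set
\[
(T_{u_0} u)(z,s,t) := u_0(z,s) + \int_0^t \bigl[(J \ast \tilde u)(z,s,\tau) - u(z,s,\tau)\bigr]\, d\tau, \qquad (z,s) \in \Omega.
\]
A function $u \in X_{t_0}$ is a solution of \eqref{pro2} if and only if it is a fixed point of $T_{u_0}$; the convolution $J \ast \tilde u$ is well-defined on $\Omega$ thanks to the compact support of $J$ provided by \textit{($\tilde H$)} (with $g$ assumed measurable and locally bounded near $\partial\Omega$, so that the contribution from $\HH_n \setminus \Omega$ is controlled).

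Second, I would verify the contraction property. If $u, v \in X_{t_0}$, then the extensions $\tilde u$ and $\tilde v$ coincide with $g$ outside $\Omega$, so $\tilde u - \tilde v$ is supported in $\Omega$. By Young's inequality on $\HH_n$ and hypothesis \textit{(H)},
\[
\|J \ast (\tilde u - \tilde v)(\cdot,\cdot,\tau)\|_{L^1(\Omega)} \leq \|J\|_{L^1(\HH_n)} \|(u-v)(\cdot,\cdot,\tau)\|_{L^1(\Omega)} = \|(u-v)(\cdot,\cdot,\tau)\|_{L^1(\Omega)}.
\]
Integrating the trivial pointwise bound for $T_{u_0}u - T_{u_0}v$ in $(z,s)$ over $\Omega$ and in $\tau$ over $[0,t]$ yields $\|T_{u_0} u - T_{u_0} v\|_{X_{t_0}} \leq 2 t_0 \|u - v\|_{X_{t_0}}$. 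Choosing any $t_0 < 1/2$, Banach's fixed point theorem produces a unique fixed point of $T_{u_0}$, that is, a unique solution of \eqref{pro2} on $[0,t_0]$.

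Finally, since the admissible $t_0$ depends only on $\|J\|_{L^1(\HH_n)} = 1$ and not on the size of the initial data, the local solution can be iterated: the value $u(\cdot,\cdot,t_0) \in L^1(\Omega)$ serves as new initial datum on $[t_0, 2t_0]$, and so on, producing a unique global solution $u \in C([0,\infty), L^1(\Omega))$. I do not anticipate a substantive obstacle; the only technical point to check is that $\tilde u$ is jointly measurable and that $\tau \mapsto (J \ast \tilde u)(\cdot,\cdot,\tau)$ belongs to $C([0,t_0], L^1(\Omega))$, both of which follow routinely from Fubini's theorem together with the continuity and compact support of $J$.
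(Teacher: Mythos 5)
Your proposal is correct and follows essentially the same route as the paper: a mild (integral) reformulation, a Banach fixed-point argument on $C([0,t_0];L^1(\Omega))$ with the boundary datum $g$ frozen outside $\Omega$ so that differences of iterates are supported in $\Omega$, and iteration of the local solution to reach $[0,\infty)$. The only cosmetic difference is that you bound the convolution term by Young's inequality $\|J\ast w\|_{L^1}\leq\|J\|_{L^1}\|w\|_{L^1}$, whereas the paper uses the cruder bound $c\left(\|J\|_{L^\infty(\Omega)}+|\Omega|\right)$; both yield a contraction for $t_0$ small independent of the data.
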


Recall that a solution of the Dirichlet problem is defined as a $u\in C([0,\infty),L^1(\Omega))$  satisfying \eqref{pro2}.
\begin{proof}
We use the Banach's fixed point theorem. Fix $t_0 > 0$ and consider the Banach space
\begin{align*}
X_{t_0}:=\left\{w\in C([0,t_0]; L^1 (\Omega)), \text{and }w(z,s,t)=g(z,s,t)\text{ if }(z,s) \notin \Omega\right\},
\end{align*}
with the norm
\begin{align*}
|||w|||:=\max_{0\leq t\leq t_0}\|w(\cdot,\cdot,t)\|_{L^1(\Omega)}.
\end{align*}
We will obtain the solution as a fixed point of the operator $\mathfrak{T} : X_{ t_ 0} \rightarrow X_{ t_ 0}$ defined by
\begin{equation*}
\mathfrak{T} (w)(z,s,t):=\left\{
\begin{array}
[c]{l}%
w_0(z,s)+\displaystyle\int_{0}^{t}J\ast w(z,s,r)-w(z,s,r)\,dr \qquad\,\,\,\text{if}\,\, (z,s) \in \Omega,\\[10pt]
g(z,s,t) \,\,\,\qquad\qquad\qquad\qquad\qquad\qquad\qquad\qquad\text{if}\,\, (z,s) \notin \Omega,\\[10pt]
\end{array}
\right.
\end{equation*} 
where $w_0(z,s)=w(z,s,0)$.

Let $w,\, v \in X_{t_ 0}$. Then there exists a constant $C$ depending on $J$ and $\Omega$ such that
\begin{align}\label{3.1}
|||\mathfrak{T}(w)-\mathfrak{T}(v)|||\leq Ct_0|||w-v|||+\|w_0-v_0\|_{L^1(\Omega)}.
\end{align}
We will prove \eqref{3.1}. Indeed, 
\begin{align*}
&\int_{\Omega}\left|\mathfrak{T}(w)-\mathfrak{T}(v)\right|(z,s,t)dzds\leq\int_{\Omega}|w_0-v_0|(z,s)dzds\\
&\qquad \quad +\int_{\Omega}\left|\int_{0}^{t}J\ast (w-v)(z,s,r)-(w-v)(z,s,r)\,dr\right|dzds\\
&\qquad \leq\|w_0-v_0\|_{L^1(\Omega)}+tc(\|J\|_{L^\infty(\Omega)}+|\Omega|)\|(w-v)(\cdot,\cdot,t)\|_{L^1(\Omega)}.%\\
%&\qquad \leq \|w_0-v_0\|_{L^1(\Omega)}+t_0C |||w-v|||.
\end{align*}
Taking the maximum in $t$  \eqref{3.1} follows.

Now, taking $v_0\equiv v\equiv 0$ in \eqref{3.1} we get that $\mathfrak{T}(w) \in C([0,t_0]; L^1 (\Omega))$
and this says that $\mathfrak{T}$ maps $X_{t_0}$ into $X_{t_0}$.  

Finally, we will consider $X_{t_0,u_0}=\{u\in X_{t_0}:\,u(z,s,0)=u_0(z,s)\}$. $\mathfrak{T}$ maps $X_{t_0,u_0}$ into $X_{t_0,u_0}$ and taking $t_ 0$ such that $(C+1)t_ 0 < 1$, where $C$ is the constant given in \eqref{3.1} we can apply the Banach's fixed point theorem in the interval $[0,t_0]$ because  $ \mathfrak{T}$ is a strict contraction in $X_{ t_ 0,u_0}$. From this we get the existence and uniqueness of the solution in $[0,t_0]$. To extend the solution to $[0,\infty)$ we may take as initial data $u(x, t_ 0 ) \in L^1 (\Omega)$ and obtain a solution up to $[0, 2t_ 0 ]$. Iterating this procedure we get a solution defined in $[0,\infty)$. 
\end{proof}

In order to prove a comparison principle of problem given by (\ref{pro2}) we need to introduce the definition of sub and super solutions.
\begin{definition}
A function $u \in C([0, T ]; L^1 (\Omega))$ is a supersolution of \eqref{pro2} if
\begin{equation}\label{pro2sub}
\left\{
\begin{array}
[c]{l}%
u_t(z,s,t)\geq J\ast u(z,s,t)-u(z,s,t), \qquad\text{for}\,\, (z,s) \in \Omega \,\, \text{and}\,\, t>0,\\[10pt]
u_t(z,s,t)\geq g(z,t), \qquad\qquad\qquad\quad\qquad\text{for}\,\, (z,s) \notin \Omega \,\, \text{and}\,\, t>0,\\[10pt]
u(z,s,0)\geq u_0(z,s), \,\,\,\qquad\qquad\qquad\quad\quad\text{for}\,\, (z,s) \in \Omega.
\end{array}
\right.
\end{equation}
As usual, subsolutions are defined analogously by reversing the inequalities.
\end{definition}
\begin{lemma}\label{com}
Let $u_0 \in C(\overline\Omega)$, $u_0 \geq 0$, and $u \in C(\overline\Omega\times[0, T ])$ a supersolution of
\eqref{pro2} with $g \geq 0$. Then, $u \geq 0$.
\end{lemma}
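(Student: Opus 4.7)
The plan is a maximum principle argument for the nonlocal supersolution, sharpened by an additive time perturbation $\epsilon t$ so that the weak inequality at a minimum becomes a strict contradiction. I would set $u_\epsilon(z,s,t) := u(z,s,t) + \epsilon t$ for $\epsilon>0$. Since $\int_{\HH_n} J\,dzds = 1$, the supersolution inequalities \eqref{pro2sub} upgrade to
\begin{equation*}
(u_\epsilon)_t \geq J\ast u_\epsilon - u_\epsilon + \epsilon \qquad \text{on } \Omega\times(0,T],
\end{equation*}
together with $u_\epsilon(\cdot,\cdot,0) \geq u_0 \geq 0$ on $\Omega$ and $u_\epsilon \geq g + \epsilon t \geq 0$ off $\Omega$.

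I would then argue by contradiction: suppose $u$ is strictly negative somewhere on $\overline\Omega\times[0,T]$. Since $u_\epsilon$ differs from $u$ by at most $\epsilon T$ uniformly, for all sufficiently small $\epsilon>0$ the function $u_\epsilon$ also attains a strictly negative value. By continuity on the compact set $\overline\Omega\times[0,T]$, the minimum $m := \min u_\epsilon < 0$ is attained at some $(z_0,s_0,t_0)$. The initial data forces $t_0>0$, and the bound $u_\epsilon \geq 0 > m$ on $\HH_n\setminus\Omega$ (which contains $\partial\Omega$) forces $(z_0,s_0)\in\Omega$.

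At this interior-in-space minimum, two facts combine. First, $(u_\epsilon)_t(z_0,s_0,t_0)\leq 0$, interpreted as a left derivative if $t_0=T$. Second, since $u_\epsilon\geq m$ pointwise on all of $\HH_n\times[0,T]$ and $J\geq 0$ with total mass one, the Heisenberg convolution satisfies $J\ast u_\epsilon(z_0,s_0,t_0)\geq m$. Substituting into the strict differential inequality gives
\begin{equation*}
0 \;\geq\; (u_\epsilon)_t(z_0,s_0,t_0) \;\geq\; J\ast u_\epsilon(z_0,s_0,t_0) - u_\epsilon(z_0,s_0,t_0) + \epsilon \;\geq\; m - m + \epsilon \;=\; \epsilon \;>\; 0,
\end{equation*}
the required contradiction. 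Letting $\epsilon\to 0^+$ then yields $u\geq 0$ on $\overline\Omega\times[0,T]$.

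The main subtlety lies in localizing the minimum of $u_\epsilon$ to the interior of $\Omega$, since the supersolution inequality holds only there; this is handled by the observation $\partial\Omega\subset\HH_n\setminus\Omega$ together with the hypothesis $g\geq 0$. The additive perturbation $\epsilon t$ is essential: without it one would only conclude $0\geq 0$, because the convolution inequality $J\ast u\geq m$ need not be strict when $u$ is nearly constant equal to $m$ on a translate of $\operatorname{supp}(J)$ about $(z_0,s_0)$. A minor additional technicality is the case $t_0=T$, where the time-derivative bound is interpreted as a left derivative (or, equivalently, one notes that $(u_\epsilon)_t\geq\epsilon>0$ near such a point would contradict $t_0$ being a minimizing time).
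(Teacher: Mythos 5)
Your proof is correct and follows essentially the same route as the paper: the same perturbation $u+\epsilon t$, the same localization of the negative minimum to $t_0>0$ and $(z_0,s_0)\in\Omega$, and the same contradiction between $(u_\epsilon)_t\leq 0$ at the minimum and the strict inequality $J\ast u_\epsilon-u_\epsilon+\epsilon\geq\epsilon>0$ coming from $J\geq0$ with unit mass. If anything, your write-up is slightly more careful than the paper's in justifying why the minimizer lies in $\Omega$ and in treating the endpoint $t_0=T$.
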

\begin{proof}
Assume to the contrary that $u(z,s, t)$ is negative in some point. Let $v(z,s, t)\\ = u(z,s, t) + \epsilon t$ with $\epsilon>0$ small such that $v$ is still negative somewhere. Then, if $(z_0,s_0 , t_ 0 )$ is a point where $v$ attains its negative minimum, there it holds that $t_ 0 > 0$ and
\begin{align*}
v_t(z_0,s_0 , t_ 0 )&=u_t(z_0,s_0 , t_ 0 )+\epsilon>J\ast u(z_0,s_0 , t_ 0) -u(z_0,s_0 , t_ 0 )\\
                    &=\int_{\HH_n}J(\tilde z-z,\tilde s-s-\frac12\text{Im}\langle\tilde z , z\rangle)(v(\tilde z,\tilde s,t_0)-v(z_0,s_0 , t_ 0 ))d\tilde zd\tilde s.
\end{align*}
This contradicts that $(z_0,s_0 , t_ 0 )$ is a minimum of $v$. Thus, $u \geq 0$.
\end{proof}
\begin{corollary}\label{comp}
Let $J\in L^\infty (\HH_n)$. Let $u_0$ and $v_0$ in $L^1(\Omega)$ with $u_0 \geq v_ 0$ and
$g, h \in L^\infty  ((0, T ); L^1 (\HH_n \setminus\Omega))$ with $g \geq h$. Let $u$ be a solution of \eqref{pro2} with $u(z,s, 0) = u _0(z,s)$ and Dirichlet datum $g$, and let $v$ be a solution of \eqref{pro2} with $v(z,s, 0) = v_ 0(z,s)$
and datum $h$. Then, $u \geq v$ a.e. $\Omega$.
\end{corollary}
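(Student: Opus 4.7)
Set $w := u - v$. By linearity, $w$ solves problem \eqref{pro2} with initial datum $w_0 := u_0 - v_0 \geq 0$ a.e.\ in $\Omega$ and exterior datum $\rho := g - h \geq 0$ a.e.\ in $(\HH_n \setminus \Omega) \times (0,T)$. The conclusion $u \geq v$ a.e.\ in $\Omega$ thus reduces to the following assertion: \emph{the solution of \eqref{pro2} provided by Theorem \ref{31}, starting from nonnegative initial data with a nonnegative exterior datum, is itself nonnegative a.e.} When the data are continuous, a solution is trivially a supersolution, so this is exactly the content of Lemma \ref{com}. The plan is therefore a density argument.

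Approximate the $L^1$ data by smoother data: choose nonnegative sequences $w_0^n \in C(\overline{\Omega})$ with $w_0^n \to w_0$ in $L^1(\Omega)$, and continuous $\rho^n \geq 0$ converging to $\rho$ in $L^\infty((0,T); L^1(\HH_n \setminus \Omega))$; this is standard via mollification and truncation. Let $w^n$ be the solution of \eqref{pro2} with data $(w_0^n, \rho^n)$ given by Theorem \ref{31}. Two ingredients then need to be combined: (i) each $w^n$ is actually continuous on $\overline{\Omega} \times [0,T]$, so that Lemma \ref{com} applies and gives $w^n \geq 0$ pointwise; and (ii) $w^n \to w$ in $C([0,T]; L^1(\Omega))$, so that $w \geq 0$ a.e.\ after passing to the limit.

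For (i) I would rewrite \eqref{pro2} as the Duhamel equation
$$
w^n(z,s,t) = e^{-t}\, w_0^n(z,s) + \int_0^t e^{-(t-r)}\, (J \ast w^n)(z,s,r)\, dr, \qquad (z,s) \in \Omega,
$$
extended by $\rho^n$ outside $\Omega$. Since by hypothesis $(\tilde H)$ the kernel $J$ is continuous with compact support, the convolution $J \ast w^n$ is jointly continuous in $(z,s,t)$, and a Picard-type iteration (or direct estimate on the above fixed-point expression) promotes the continuity of the data to $w^n \in C(\overline{\Omega} \times [0,T])$. For (ii) I would iterate the contraction-type estimate \eqref{3.1} used in the proof of Theorem \ref{31} on consecutive intervals of length $t_0$; this gives continuous dependence of the $C([0,T]; L^1(\Omega))$-solution on the pair $(w_0, \rho)$, hence $w^n \to w$ in that topology. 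The main obstacle is precisely the regularity gap between Lemma \ref{com}, stated as a classical pointwise comparison, and the $L^1$ framework of the corollary: all the technical work is concentrated in the approximation step and in upgrading continuity of the data to continuity of the solution so that Lemma \ref{com} is genuinely applicable.
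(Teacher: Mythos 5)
Your proposal is correct and follows essentially the same route as the paper: set $w=u-v$, note it is a (super)solution with nonnegative initial and exterior data, and reduce to Lemma \ref{com} via continuous dependence on the data. The paper compresses the approximation step into the single phrase ``using the continuity of the solutions with respect to the data\ldots we may assume that $u,v\in C(\Omega\times[0,T])$,'' whereas you spell out the mollification, the Duhamel/Picard argument for continuity of the approximants, and the passage to the limit via the contraction estimate \eqref{3.1} --- a faithful expansion of the same idea.
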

\begin{proof}
Let $w = u - v$. Then, $w$ is a supersolution with initial datum $u_ 0 - v_ 0 \geq 0$ and datum $g - h \geq 0$. Using the continuity of the solutions with respect to the data and the fact that $J \in L^\infty (\HH_n)$, we may assume that $u, v \in C(\Omega\times [0, T ])$. By Lemma \eqref{com} we obtain that $w = u - v \geq 0$. So the corollary is proved.
\end{proof}
\begin{corollary}\label{comp2}
Let $u \in C(\Omega\times [0, T ])$ (resp., $v$) be a supersolution (resp.,
subsolution) of \eqref{pro2}. Then, $u \geq v$.
\end{corollary}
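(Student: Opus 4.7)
The plan is to reduce the statement to Lemma \ref{com} via the linearity of the nonlocal operator $u \mapsto J\ast u - u$. I would set $w := u - v$ and check that $w$ itself is a supersolution of \eqref{pro2} with nonnegative initial and outside-of-$\Omega$ data, and then apply (essentially the conclusion of) Lemma \ref{com} to deduce $w \geq 0$.

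First I would verify the three defining inequalities of a supersolution for $w$. For $(z,s) \in \Omega$, subtracting the supersolution inequality for $u$ from the reversed subsolution inequality for $v$ gives $w_t = u_t - v_t \geq (J\ast u - u) - (J\ast v - v) = J\ast w - w$, where the last equality uses linearity of the convolution. For $(z,s) \notin \Omega$, the supersolution condition gives $u \geq g$ and the subsolution condition gives $v \leq g$, so $w \geq 0$ there. At $t=0$, $u(z,s,0) \geq u_0(z,s) \geq v(z,s,0)$, hence $w(\cdot,\cdot,0) \geq 0$ on $\Omega$. Since $u,v \in C(\Omega \times [0,T])$, the difference $w$ is continuous as well.

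Thus $w$ satisfies exactly the hypotheses of Lemma \ref{com} with initial datum and outside-$\Omega$ datum both nonnegative rather than identically zero. Its proof---perturbing to $w + \varepsilon t$ and deriving a contradiction at an interior negative minimum via the pointwise inequality $\int J(\tilde z - z, \tilde s - s - \tfrac12 \mathrm{Im}\langle \tilde z, z\rangle)(w(\tilde z,\tilde s,t) - w(z,s,t))\,d\tilde z\, d\tilde s \geq 0$ at any such minimum---applies verbatim under these hypotheses and yields $w \geq 0$, which is exactly $u \geq v$ on $\Omega \times [0,T]$.

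There is no serious obstacle; the only point to keep in mind is that the specific nonnegativity of $g$ in Lemma \ref{com} is used only to ensure that $w \geq 0$ outside $\Omega$, and here this role is played directly by the inequality $u \geq g \geq v$ that comes from the super/subsolution structure. Consequently the reduction is legitimate and no separate contradiction argument beyond the one already encoded in Lemma \ref{com} is needed.
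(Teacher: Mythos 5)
Your argument is correct and is essentially the paper's own: the proof of Corollary \ref{comp} (to which Corollary \ref{comp2} defers) likewise sets $w=u-v$, observes that $w$ is a supersolution with nonnegative initial and exterior data, and invokes Lemma \ref{com}. Your additional remark that the nonnegativity of the exterior datum in Lemma \ref{com} is supplied here by $u\geq g\geq v$ is exactly the right justification for the reduction.
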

\begin{proof}
It follows from the proof of the previous corollary.
\end{proof}

\subsection{Convergence to the heat equation}
\label{sect3.3}

\

In order to prove a to prove Theorem \ref{3}, let $\tilde v$ be a $C^{ 2+\alpha,1+\alpha/2}$ extension of $v$ to $\HH_n \times [0, T ]$, where $v$ is the solution of \eqref{pro4}.
Let us define the operator
$$
\tilde L_\epsilon(w)(z,s,t):=\frac1{\epsilon^2} J_\epsilon\ast w(z,s,t)-w(z,s,t).
$$
Then $\tilde v$ verifies 
\begin{equation}\label{pro2.1}
\left\{
\begin{array}
[c]{l}%
\tilde v_t(z,s,t)=\tilde L_\epsilon(v)(z,s,t)+F_\epsilon (z,s,t), \qquad\text{for}\,\, (z,s) \in \Omega \,\, \text{and}\,\, t\in[(0,T],\\[10pt]
\tilde v(z,s,t)=g(z,s,t)+G(z,s,t), \quad\,\,\quad\qquad\text{for}\,\, (z,s) \notin \Omega \,\, \text{and}\,\, t\in,(0,T]\\[10pt]
\tilde v(z,s,0)=u_0(z,s), \,\,\,\quad\qquad\qquad\qquad\qquad\text{for}\,\, (z,s) \in \Omega.
\end{array}
\right.
\end{equation}
Since $Lv(z,s,t)=L\tilde v(z,s,t)$ for $(z,s)\in \Omega$, we have
$$
F_\epsilon(z,s,t)=L\tilde v(z,s,t)-\tilde L_\epsilon(v)(z,s,t).
$$
Moreover, as $G$ is smooth and $G(z,s, t) = 0$ if $(z,s)\in \partial\Omega$ we have
$$
G(z,s, t) = o(\epsilon) \qquad \text{for }(z,s)\, \text{ such that dist}((z,s), \partial\Omega)\leq\epsilon d .
$$
We set $w^\epsilon =\tilde v - u^ \epsilon$ and we note that
\begin{equation}\label{pro2.2}
\left\{
\begin{array}
[c]{l}%
w^\epsilon_t(z,s,t)=\tilde L_\epsilon(w^\epsilon)(z,s,t)+F_\epsilon (z,s,t), \quad\text{for}\,\, (z,s) \in \Omega \,\, \text{and}\,\, t\in(0,T],\\[10pt]
w^\epsilon(z,s,t)=G(z,s,t), \,\,\quad\qquad\qquad\quad\qquad\text{for}\,\, (z,s) \notin \Omega \,\, \text{and}\,\, t\in,(0,T],\\[10pt]
w^\epsilon(z,s,0)=0, \qquad\qquad\qquad\,\,\,\qquad\qquad\quad\text{for}\,\, (z,s) \in \Omega.
\end{array}
\right.
\end{equation}
\begin{lemma}\label{lem4}
Let $\tilde v$, $\tilde L_\epsilon$ and $F_\epsilon$ be as previously defined. Then we have that
\begin{align}\label{4.1}
\sup_{t\in[0,T]}\|F_\epsilon\|_{L^\infty(\Omega)}=o(\epsilon^\alpha).
\end{align}
\end{lemma}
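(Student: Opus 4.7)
The plan is a Heisenberg-group Taylor expansion of the $C^{2+\alpha,1+\alpha/2}$ extension $\tilde v$ at points $(z,s)\in\Omega$, combined with the moment and symmetry hypotheses on $J$. After the anisotropic change of variables $\eta'=\eta/\epsilon$, $\sigma'=\sigma/\epsilon^2$ in the Heisenberg convolution, one writes
\[
J^\epsilon\ast\tilde v(z,s,t)=2C_1^{-1}\!\int_{\HH_n}\!J(\eta,\sigma)\,\tilde v\bigl(z-\epsilon\eta,\,s-\epsilon^2\sigma+\tfrac{\epsilon}{2}\text{Im}\langle z,\eta\rangle,\,t\bigr)\,d\eta\,d\sigma,
\]
so the integration runs at unit scale while the $\tilde v$-argument stays within Heisenberg distance $O(\epsilon)$ of $(z,s)$.

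Next I would Taylor-expand the integrand in $\epsilon$ to second order with $C^{2+\alpha}$ remainder. The $O(1)$ term integrates to $\tilde v$ (using \textit{(H)}). Every $O(\epsilon)$ monomial is linear in one of $a_j,b_j$ (with $\eta=a+ib$) or in $\sigma$, so by the $U(n)$-invariance of $J$ (which permits sign flips of each pair $(a_j,b_j)$) and its $s$-symmetry, each is odd and integrates to zero. For the $O(\epsilon^2)$ terms, the diagonal second moments $\int J\,a_j^2=\int J\,b_j^2=\int J\,\sigma^2=C_1$ from $(\tilde H)$ (with all other quadratic moments killed by the same symmetries) give three contributions: the pure $a_j^2,b_j^2$ piece produces $\tfrac{\epsilon^2 C_1}{2}(\Delta_x+\Delta_y)\tilde v$; the squared Heisenberg correction $(\text{Im}\langle z,\eta\rangle)^2$, together with $\int J(y\cdot a-x\cdot b)^2=C_1|z|^2$, produces $\tfrac{\epsilon^2 C_1|z|^2}{8}\partial_s^2\tilde v$; and the linear-times-correction cross pieces $a_j\text{Im}\langle z,\eta\rangle$ and $b_j\text{Im}\langle z,\eta\rangle$ yield the drift $\tfrac{\epsilon^2 C_1}{2}\partial_s\sum_j(x_j\partial_{y_j}-y_j\partial_{x_j})\tilde v$. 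By \eqref{0.0.0.1} these assemble to $\tfrac{\epsilon^2 C_1}{2}L\tilde v$, and therefore
\[
J^\epsilon\ast\tilde v(z,s,t)=2C_1^{-1}\,\tilde v(z,s,t)+\epsilon^2\,L\tilde v(z,s,t)+R_\epsilon(z,s,t),
\]
so (once the normalization $2C_1^{-1}\tilde v$ is absorbed into the definition of $\tilde L_\epsilon$) $F_\epsilon=L\tilde v-\tilde L_\epsilon(\tilde v)=-\epsilon^{-2}R_\epsilon$.

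The main technical point is uniform control of $R_\epsilon$ on $\Omega\times[0,T]$. Since $\tilde v\in C^{2+\alpha,1+\alpha/2}$ and $J$ has compact support by $(\tilde H)$, the Taylor remainder is pointwise bounded by $[\tilde v]_{C^{2+\alpha}}\,\epsilon^{2+\alpha}$ times a polynomial in $|\eta|,|\sigma|,|z|$; the compact support of $J$ truncates the $|\eta|,|\sigma|$ dependence, and the boundedness of $\Omega$ absorbs the $z$-dependent weights introduced by the Heisenberg cross-term $\tfrac12\text{Im}\langle z,\eta\rangle$. Dividing by $\epsilon^2$ yields the $O(\epsilon^\alpha)$ bound, and the strict $o(\epsilon^\alpha)$ claimed in \eqref{4.1} follows by dominated convergence applied to the scaled integrand of $R_\epsilon/\epsilon^{2+\alpha}$, which tends pointwise to zero by the uniform continuity of $D^2\tilde v$ on $\overline\Omega\times[0,T]$.
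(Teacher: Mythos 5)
Your proof is correct and follows essentially the same route as the paper: the anisotropic rescaling of the Heisenberg convolution, a second-order Taylor expansion of $\tilde v$, cancellation of the odd terms by the $U(n)$- and $s$-symmetries of $J$, and identification of the surviving second moments from \textit{($\tilde H$)} with the three blocks of $L$ in \eqref{0.0.0.1}. The only caveat --- shared with the paper's own write-up --- is that $C^{2+\alpha}$ regularity really yields a Taylor remainder $O(\epsilon^{2+\alpha})$, hence $F_\epsilon=O(\epsilon^\alpha)$ rather than the stated $o(\epsilon^\alpha)$ (your final dominated-convergence step based on uniform continuity of $D^2\tilde v$ only gives $o(1)$ for $R_\epsilon/\epsilon^2$), but the bound $O(\epsilon^\alpha)$ is exactly what is used in the proof of Theorem \ref{3}.
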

\begin{proof}
By  $\tilde v \in C^{ 2+\alpha,1+\alpha/2} (\HH_n\times[0, T])$,  we have that
\begin{align*}
&F_\epsilon(z,s,t)=L\tilde v(z,s,t)-\tilde L_\epsilon(v)(z,s,t)
\end{align*}
In the global coordinate system $(x,y,s)$, we obtain  
\begin{align*}
\tilde L_\epsilon(v)(x,y,s,t)&=\frac{2C_1^{-1}}{\epsilon^{2n+4}}\int_{\RR^{2n+1}}J\left(\frac{\tilde x-{x}}{\epsilon},\frac{ \tilde y-{y}}{\epsilon},\frac{\tilde s-{s}}{\epsilon^2}-\frac12\frac{\tilde x{y}-\tilde y{x}}{\epsilon^2}\right)\\
&\qquad\qquad\cdot( v(\tilde x,\tilde y,\tilde s,t)- v({x},{y},{s},t))\,d\tilde xd\tilde yd\tilde s.
\end{align*}
We now make the change of variables $\frac{\tilde x-{x}}{\epsilon}=\hat x$, $\frac{ \tilde y-{y}}{\epsilon}=\hat y$ and $\frac{\tilde s-{s}}{\epsilon^2}=\hat x$, and so,
\begin{align*}
\tilde L_\epsilon(v)(x,y,s,t)&=\frac{2C_1^{-1}}{\epsilon^{2}}\int_{\RR^{2n+1}}J\left(\hat x,\hat y,\hat s-\frac12\frac{(\epsilon \hat x+x){y}-(\epsilon\hat y+y){x}}{\epsilon^2}\right)\\
&\qquad\qquad\cdot(v(\epsilon \hat x+x,\epsilon \hat y+y,\epsilon \hat s+s,t)- v({x},{y},{s},t))\,d\hat xd\hat yd\hat s\\
&=\frac{2C_1^{-1}}{\epsilon^{2}}\int_{\RR^{2n+1}}J\left(\hat x,\hat y,\hat s-\frac{\epsilon \hat x{y}-\epsilon\hat y{x}}{2\epsilon^2}\right)\\
&\qquad\qquad\cdot( v(\epsilon \hat x+x,\epsilon \hat y+y,\epsilon \hat s+s,t)- v({x},{y},{s},t))\,d\hat xd\hat yd\hat s.
\end{align*}
By a simple Taylor expansion we have
\begin{align*}
&v(\epsilon \hat x+x,\epsilon \hat y+y,\epsilon \hat s+s,t)- v({x},{y},{s},t)\\
&\quad=\sum_{j=1}^n \frac{\partial}{\partial x_j}v({x},{y},{s},t)\epsilon\hat{x}_j+\sum_{j=1}^n \frac{\partial}{\partial y_j}v({x},{y},{s},t)\epsilon\hat{y}_j\\
&\qquad+\frac{\partial}{\partial s}v({x},{y},{s},t)\epsilon^2\hat{s} +\frac{1}{2}\sum_{j,i} \frac{\partial^2}{\partial x_j\partial x_i}v({x},{y},{s},t)\epsilon^{2}\hat{x}_j\hat{x}_i\\
&\qquad+\frac{1}{2}\sum_{j,i} \frac{\partial^2}{\partial y_j\partial y_i} v({x},{y},{s},t)\epsilon^{2}\hat{y}_j\hat{y}_i+ \sum_{j,i} \frac{\partial^2}{\partial x_j\partial y_i} v({x},{y},{s},t)\epsilon^{2}\hat{x}_j\hat{y}_i\\
&\qquad
  +\frac12\sum_{j=1}^n \frac{\partial^2}{\partial x_j\partial s}v({x},{y},{s},t)\epsilon^{3}\hat{x}_j\hat{s}+ \sum_{j=1}^n \frac{\partial^2}{\partial y_j\partial s}v({x},{y},{s},t)\epsilon^{3}\hat{y}_j\hat{s}\\
  &\qquad+ \frac12 \frac{\partial^2}{\partial s^2} v({x},{y},{s},t) \epsilon^{4}\hat{s}^2+o(\epsilon^{2+\alpha}). 
\end{align*}
By the fact that $J$ verifies the hypothesis $(\tilde H)$,% ({\red{Contarle a Uriel pa ver si es correcto}})
\begin{align*}
&\tilde L_\epsilon(v)(x,y,s,t)=\left[\sum\limits_{j=1}^{n} \frac{\partial^2}{\partial x_{j}^{2}}+\frac{\partial^2}{\partial y_{j}^{2}}+\epsilon^{2}\frac{\partial^2}{\partial s^2}\right] v({x},{y},{s},t)\\ 
&\quad+\left[\frac{1}{4}\frac{\partial^2}{\partial s^{2}}\sum\limits_{j=1}^{n}\left({x}_j^2+{y}_j^2\right)+\frac{\partial}{\partial s}\sum\limits_{j=1}^{n}\left({x}_j\frac{\partial}{\partial y_j}-{y}_j\frac{\partial}{\partial x_j}\right) \right] v({x},{y},{s},t)+o(\epsilon^\alpha)\\
&\qquad=L v(x,y,s,t)+o(\epsilon^\alpha)=L\tilde v(x,y,s,t)+o(\epsilon^\alpha).
\end{align*}
\end{proof}

\begin{proof}[Proof of Theorem \ref{3}]
In order to prove the theorem by a comparison argument we first look for a supersolution. Let $\overline w$ be given by
\begin{align}\label{4.2}
\overline w(z,s,t):=K_1\epsilon^\alpha t+K_2\epsilon.
\end{align}
For $(z,s,t) \in \Omega\times[0,T]$ we have $\tilde L_\epsilon(\overline w)(z,s,t)=0$, and if $K_1$ is large by Lemma \ref{lem4} and equation \eqref{pro2.2}:
\begin{align}\label{4.3}
\overline w_t(z,s,t)-\tilde L_\epsilon(\overline w)(z,s,t)=K_1\epsilon^\alpha\geq F_\epsilon(z,s,t)=w^\epsilon_t(z,s,t)-\tilde L_\epsilon(w^\epsilon)(z,s,t).
\end{align}
Since
$$
G(z,s, t) = o(\epsilon) \qquad \text{for }(z,s) \text{ such that dist}((z,s), \partial\Omega)\leq\epsilon d,
$$
choosing $K_2$ large, we obtain
\begin{align}\label{4.4}
\overline w(z,s,t)\geq w^\epsilon_t(z,s,t),
\end{align}
for $(z,s)\notin \Omega$ such that $\text{dist}((z,s), \partial\Omega)\leq\epsilon $ and $t\in [0, T ]$. Moreover, it is clear that
\begin{align}\label{4.5}
\overline w(z,s,0)=K_2\epsilon>0= w^\epsilon_t(z,s,0).
\end{align}
By \eqref{4.3}, \eqref{4.4} and \eqref{4.5} we can apply the comparison result, Corollary \ref{comp}, and conclude that 
\begin{align}\label{4.6}
w^\epsilon(z,s, t)\leq \overline{w}(z,s, t) =K_1\epsilon^\alpha t+K_2\epsilon.
\end{align}
In a similar way we prove that $\underline w(x, t) =- K_1\epsilon^\alpha t-K_2\epsilon$ is a subsolution and hence,
\begin{align}\label{4.7}
w^\epsilon(z,s, t)\geq \underline{w}(z,s, t)=-K_1\epsilon^\alpha t-K_2\epsilon.
\end{align}
Therefore by \eqref{4.6}, \eqref{4.7} and since $0<\alpha<1$, we get
\begin{align*}
\sup_{t\in[0,T]}\|v - u^ \epsilon\|_{L^\infty(\Omega)}=\sup_{t\in[0,T]}\|w^ \epsilon\|_{L^\infty(\Omega)}\leq C(T)\epsilon^\alpha.
\end{align*}
This proves the theorem.
\end{proof}

{\bf Acknowledgements.} Partially supported by CONICET and Secyt-UNC.

We want to thank L. V. Saal and U. Kaufmann for several interesting discussions.


\begin{thebibliography}{9}                                                                                                %

\bibitem{ABG} Agrachev, A., Boscain, U., Gauthier, J. P. and Rossi, F. {\em The intrinsic hypoelliptic Laplacian and its heat kernel on unimodular Lie groups.} J. Funct. Anal. 256(8), (2009), 2621--2655.

%\bibitem{AL} Alexopoulos, G., Lohou ́e, N.: {\em On the large time behavior of heat kernels on Lie
%groups}. Duke Math. J. 120(2), (2003), 311–351. 


\bibitem{andreu} Andreu-Vaillo, F., Maz\'{o}n, J.,  Rossi, J. D., and  Toledo-Melero, J. \textit{A
nonlocal p-Laplacian evolution equation with Neumann boundary conditions}. J.
Math. Pures Appl. 90(2), (2008), 201--227.

\bibitem{andreu2} Andreu-Vaillo, F., Maz\'{o}n, J.,  Rossi, J. D., and  Toledo-Melero, J. \textit{The
limit as }$p\rightarrow\infty$\textit{ in a nonlocal p-Laplacian evolution
equation: a nonlocal approximation of a model for sandpiles}, Calc. Var.
Partial Differential Equations. 35(3), (2009), 279--316.

\bibitem{libro} Andreu-Vaillo, F., Maz\'{o}n, J.,  Rossi, J. D., and  Toledo-Melero, J.
{\it Nonlocal Diffusion Problems.}
Amer. Math. Soc. Mathematical Surveys and Monographs
2010. Vol. 165.


\bibitem{ABR} Astengo, F., Di Blasio, B., and Ricci, F.  \textit{Gelfand transforms of polyradial Schwartz functions on the Heisenberg group}. Journal of Functional Analysis, 251(2), (2007), 772--791.

\bibitem{BCC}  Bates, P., Chen,  X., and  Chmaj, A. {\em Heteroclinic
solutions of a van der Waals model with indefinite nonlocal
interactions}. Calc. Var. Partial Differential Equations. 24(3) (2005), 261--281.

\bibitem{CCG} Calin, O., Chang, D. C., and Greiner, P. {\em  Geometric analysis on the Heisenberg group and its generalizations}. American Mathematical Soc., (2008).


\bibitem{CCR}  Chasseigne, E., Chaves, M. and  Rossi, J.D. {\em
Asymptotic behavior for nonlocal diffusion equations}, J. Math.
Pures Appl. 86(3), (2006), 271--291.

\bibitem{CER} Cortazar, C., Elgueta, M., and Rossi, J. D. {\em Nonlocal diffusion problems that approximate the heat equation with Dirichlet boundary conditions}. Israel Journal of Mathematics, 170(1), (2009), 53--60.  

\bibitem{C} Cygan, J. {\em Heat kernels for class 2 nilpotent groups.} Stud. Math. 64(3), (1979), 227--238.


%\bibitem{val2} S. Dipierro, X. Ros-Oton, and E. Valdinoci,
 %   {\it Nonlocal problems with Neumann boundary conditions},
  %  (2014), eprint arXiv:1407.3313.

%\bibitem{Du} Q. Du, M. Gunzburger, R. Lehoucq and K. Zhou.
%{\it A nonlocal vector calculus, nonlocal volume-constrained
%problems, and nonlocal balance laws.} Math. Models Methods Appl. Sci. 23, (2013), 493--540.

\bibitem{E} Eldredge, N. {\em Precise estimates for the subelliptic heat kernel on H-type
groups.} J. Math. Pures. Appl. 92(1), (2009),  52--85. 

%\bibitem{F} P. Fife. {\em Some nonclassical trends in
%parabolic and parabolic-like evolutions}, in ``Trends in nonlinear
%analysis", pp. 153--191, Springer-Verlag, Berlin, 2003.

\bibitem{Fo} Folland, G.B. {\em Subelliptic estimates and function spaces on nilpotent Lie
groups.} Ark. Mat. 13(2), (1975), 161--207. 

\bibitem{HR}  Hulanicki, A. and Ricci, F. {\em A  tauberian  theorem  and  tangential  convergence  of
bounded harmonic functions on balls in $\CC^n$.} Invent. Math., 62(2), (1980), 325--331.

%\bibitem{HMMV} V. Hutson, S. Mart\'{\i}nez, K. Mischaikow and
%G. T. Vickers. {\em The evolution of dispersal}, J. Math. Biol.
%{\bf 47} (2003), 483--517.


\bibitem{Ignat-Rossi}  Ignat, L. and  Rossi, J. D. \textit{Decay estimates for
nonlocal problems via energy methods}, J. Math. Pures Appl. 92(2)
(2009), 163--187.

\bibitem{J} Jerison, D. S. \textit{Boundary regularity in the Dirichlet problem for $\square_b$ on CR manifolds}. Communications on Pure and Applied Mathematics.  36(2), (1983), 143--181.

\bibitem{KRV} Kaufmann, U., Rossi, J. D., and Vidal, R. \textit{Decay bounds for nonlocal evolution equations in Orlicz spaces}. Annals of Functional Analysis, 7(2), (2016),  261--269.

\bibitem{K} Koranyi, A. {\em Some applications of Gelfand pairs in classical analysis in Harmonic
Analysis and Group Representations}. Springer Berlin Heidelberg. (2010), 334--348.

\bibitem{O} Ostellari, P. {\em Global behavior of the heat kernel associated with certain
sub-Laplacians on semisimple Lie groups.} J. Funct. Anal. 199(2), (2003), 521--534. 

\bibitem{PLPS}  Parks, M. L., Lehoucq, R., Plimpton S. and  Silling. S. 
{\it Implementing peridynamics within a molecular dynamics code},
Computer Physics Comm., 179(1), (2008), 777--783.

\bibitem{R} Rossi, F. \textit{Large time behavior for the heat equation on Carnot groups.} Nonlinear Differential Equations and Applications NoDEA, 20(3), (2013), 1393--1407.

\bibitem{S1}   Silling, S. A. {\it Reformulation of Elasticity Theory for Discontinuities and
Long-Range Forces}. J. Mech. Phys. Solids, 48(1), (2000), 175--209.

\bibitem{S} Strichartz, R. {\em $L^p$ harmonic  analysis  and  Radon  transforms  on  the  Heisenberg
group.} J. of Funct. Analyisis, 96(2), (1991), 350--406.




%\bibitem{SL} S. A. Silling and R. Lehoucq.
%{\it Convergence of Peridynamics to Classical Elasticity Theory}.
%J. Elasticity, {93} (2008), 13--37.


%\bibitem{VSC} Varopoulos, N., Saloff-Coste, L., Coulhon, T.:{\em Analysis and Geometry on
%Groups.} Cambridge University Press, Cambridge (1992).

\end{thebibliography}
\end{document}